\numberwithin{equation}{section}
\theoremstyle{plain}
\newtheorem{theorem}{Theorem}[section]
\newtheorem{lemma}[theorem]{Lemma}
\newtheorem{proposition}[theorem]{Proposition}
\theoremstyle{definition}
\newtheorem{definition}[theorem]{Definition}
\newtheorem{remark}[theorem]{Remark}
\newtheorem{example}[theorem]{Example}
\newcommand{\W}{\dot{W}}
\newcommand{\ud}{\ensuremath{ \mathrm{d}} }
\newcommand{\Ceil}[1]{\left\lceil #1 \right\rceil}
\newcommand{\Floor}[1]{\left\lfloor #1 \right\rfloor}
\newcommand{\LFD}{I\!\!D}
\newcommand{\FoxH}[5]{H_{#2}^{#1}\left(#3\:\middle\vert\: \begin{subarray}{l}#4\\[0.4em] #5\end{subarray}\right)}
\newcommand{\calF}{\mathcal{F}}
\newcommand{\bbZ}{\mathbb{Z}}
\newcommand{\R}{\mathbb{R}}
\newcommand{\One}{\ensuremath{\mathrm{1}} }
\renewcommand{\Re}{\ensuremath{\mathrm{Re}} }
\newcommand{\DCap}[1]{\partial^{#1}}
\title{H\"older regularity of the nonlinear stochastic time-fractional\\ slow and fast diffusion equations on $\R^d$}
\author{
		Le Chen\\
		Emory University\\
		Department of Mathematics\\
		Atlanta, GA 30322\\
		\texttt{le.chen@emory.edu}\\
		\and
		Guannan Hu\\Washburn University\\
		Department of Mathematics  \\ and Statistics\\
		Topeka, KS 66606 \\
		\texttt{guannan.hu@washburn.edu}
		\date{\today}
}
\begin{document}
\maketitle

\begin{center}
\begin{minipage}[rct]{5 in}
\footnotesize

\textbf{Abstract:} In this paper, we use {\it local fraction derivative} to show the H\"older
continuity of the solution to the following nonlinear time-fractional slow and fast diffusion
equation:
\begin{align*}
		\left(\partial^\beta+\frac{\nu}{2}(-\Delta)^{\alpha/2}\right)u(t,x) = I_t^\gamma\left[\sigma\left(u(t,x)\right)\W(t,x)\right],\quad t>0,\: x\in\R^d,
\end{align*}
where $\W$ is the space-time white noise, $\alpha\in(0,2]$, $\beta\in(0,2)$, $\gamma\ge 0$ and
$\nu>0$, under the condition that $2(\beta+\gamma)-1-d\beta/\alpha>0$. The case when
$\beta+\gamma\le 1$ has been obtained in \cite{ChHuNu19}. In this paper, we have removed this extra
condition, which in particular includes all cases for $\beta\in(0,2)$.

\vspace{2ex}

\textbf{MSC 2010 subject classifications:} Primary 60H15. Secondary 60G60, 35R60.

\vspace{2ex}

\textbf{Keywords:} Fractional Taylor theorem, local fractional derivative, nonlinear stochastic time-fractional slow and fast
diffusion equations, H\"older continuity, \textit{Mittag-Leffler} function, \textit{Fox-H} function.
\vspace{4ex}
\end{minipage}
\end{center}

\section{Introduction}
In this article, we study the H\"older continuity of the solution of the following nonlinear
stochastic time-fractional slow and fast diffusion equations:
\begin{align}\label{E:SPDE}
\begin{cases}
		\left(\displaystyle \DCap{\beta} +\frac{\nu}{2}(-\Delta)^{\alpha/2} \right) u(t,x) =
		I_t^\gamma\left[\sigma\left(u(t,x)\right) \dot{W}(t,x) \right], & t>0,\:x\in\R^d,\\[0.2em]
		u(0,\cdot)=\mu & \text{if $\beta\in \:(0,1]$,}\\[0.2em]
		\displaystyle u(0,\cdot)=\mu_0,\quad\frac{\partial }{\partial t}u(0,\cdot)=\mu_1 & \text{if $\beta\in \:(1,2)\:$,}
\end{cases}
\end{align}
where $\nu>0$ is the diffusion parameter, the function $\sigma:\R\mapsto \R$ is globally Lipschitz
continuous, and $\W$ denotes the space-time white noise. In this equation, there are three
fractional operators: $(-\Delta)^{\alpha/2}$, $\alpha\in (0,2]$, refers to the standard fractional
Laplacian in the spatial variable; $\DCap{\beta}$ denotes the {\it Caputo fractional
derivative}
\begin{align}
		\label{E:caputuder}
		\DCap{\beta} f(t) :=
		\begin{cases}
				\displaystyle
				\frac{1}{\Gamma(m-\beta)} \int_0^t\ud \tau\: \frac{f^{(m)}(\tau)}{(t-\tau)^{\beta+1-m}}& \text{if $m-1<\beta<m$\;,}\\[1em]
				\displaystyle
				\frac{\ud^m}{\ud t^m}f(t)& \text{if $\beta=m$}\;,
		\end{cases}
\end{align}
and $I_{s+}^\gamma$ refers to the (left-sided) {\it Riemann-Liouville fractional integral} of order $\gamma> 0$:
\begin{align*}
		\left( I_{s+}^\gamma f \right)(t) := \frac{1}{\Gamma(\gamma)}\int_s^t (t-r)^{\gamma-1}f(r)\ud r,  \quad \text{for $t>s$},
\end{align*}
and the term $I_t^\gamma\left[\sigma(u(t,x))\W(t,x)\right]$ in \eqref{E:SPDE} refers to the more cumbersome notation
\begin{align*}
		\left(I_{0+}^\gamma \left[\sigma\left(u(\cdot,x)\right)\W(\cdot,x)\right]\right)(t).
\end{align*}
The existence and uniqueness of a random field solution has been established in \cite{ChHuNu19}
under the {\it Dalang's condition}: $d<\Theta$, which is equivalent to
\begin{align}
		\label{E:Dalang}
		\rho>0\quad\text{and}\quad d<2\alpha,
\end{align}
where the constants $\rho$ and $\Theta$ are defined as follows:
\begin{align}
		\label{E:RhoTheta}
		\rho&  := 2(\beta+\gamma)-1-d\beta/\alpha \qquad\text{and}\qquad
		\Theta := 2\alpha + \frac{\alpha}{\beta}\min(2\gamma-1,0).
\end{align}

The stochastic partial differential equations (SPDE's) with time-fractional differential operators
have received many attentions recently, which have been studied in one way or another using various
tools from stochastic analysis; see \cite{Chen14Time, CHHH15Time, ChHuNu19, ChenKimKim15, HuHu15,
LRD18, MijenaNane14ST}. This current paper follows the same setup as \cite{ChHuNu19}, where the
existence/uniqueness of the solution to \eqref{E:SPDE}, moment Lyapunov exponent, sample-path
H\"older regularity, etc., have been studied. In particular, the sample-path regularity was obtained
under the following additional condition:
\begin{align}
		\label{E:bg<1}
		\beta+\gamma\le 1,
\end{align}
which in particular excludes the case when $\beta\in (1,2)$. The question is whether one can get rid
of this additional requirement.

By carefully examining the corresponding proof in \cite{ChHuNu19}, one finds that the completely
monotonic property \cite{Schneider96CM} of the \textit{two-parameter Mittag-Leffler} function has
been applied in a crucial way. This complete monotonic property, applied to $E_{\beta,\beta+\gamma}
(-x)$, $x\ge 0$,  says that \begin{align} \label{E:CM} \begin{aligned} x\in[0,\infty) \mapsto
E_{\beta,\beta+\gamma} (-x) \:\:\text{is complete monotonic} \quad & \Longleftrightarrow \quad 0<
\beta \le \min(\beta+\gamma,1)\\ & \Longleftrightarrow \quad 0< \beta < 1 \:\:\text{and}\:\:
\gamma\ge 0, \end{aligned} \end{align} which in particular restricts the results in \cite{ChHuNu19}
only to the cases when $\beta\in (0,1]$.  Moreover, the proof in \cite{ChHuNu19} requires an even
stronger condition, namely, \eqref{E:bg<1}, than those on the far right side of \eqref{E:CM}.  It is
well known that when $\beta\in(1,2)$, the Mittag-Leffler function has finite (odd) number of
negative real zeros (see, e.g., Section 18.1 of \cite{ErdelyiEtc55HTF-V3}), or the function of
interest in \eqref{E:CM} is in general oscillatory when $\beta\in(1,2)$, which, together with the
fact that one needs to bound the Mittag-Leffler functions of the form in \eqref{E:CM} from below in
the proof of \cite{Chen14Time}, poses a significant challenge to extend the method used in
\cite{ChHuNu19} to cover the cases without the extra restriction \eqref{E:bg<1}. Instead, inspired
by the {\it local fractional derivatives} (LFD), we manage to obtain the desired regularity results
without condition \eqref{E:bg<1}, which covers the corresponding results obtained in \cite{ChHuNu19}
as a special case.  \bigskip

In order to state our main result, let us first recall that for a given subset $D\subseteq
[0,\infty)\times\R^d$ and positive constants $\beta_1$ and $\beta_2$, $C_{\beta_1,\beta_2}(D)$
denotes the set of {\it (locally) H\"older continuous functions over $D$ }, that is, any function
$v: [0,\infty)\times \R^d \mapsto \R$ with the property that for each compact set $K\subseteq D$,
there is a finite constant $C$ such that for all $(t,x)$ and $(s,y)\in K$,
\begin{align*}
		\vert v(t,x) - v(s,y) \vert \leq C \left(\vert t-s\vert^{\beta_1} + \vert x-y \vert^{\beta_2}\right).
\end{align*}
Denote $C_{\beta_1-,\beta_2-}(D) := \cap_{\alpha_1\in \;(0,\beta_1)}
\cap_{\alpha_2\in \;(0,\beta_2)} C_{\alpha_1,\alpha_2}(D)$.

\begin{theorem}
		\label{T:main}
		Let $u(t,x)$ be the solution to \eqref{E:SPDE} under Dalang's condition \eqref{E:Dalang} and assume
		that the initial conditions $\mu$, $\mu_0$ and $\mu_1$ are such that
		\begin{align}
				\label{E:InitCt}
				\sup_{(s,x)\in[0,t]\times\R^d} |J_0(s,x)|<\infty,\qquad\text{for all $t\ge 0$},
		\end{align}
		where $J_0(t,x)$ refers to the solution to the homogeneous equation of \eqref{E:SPDE} (see
		\eqref{E:J0}). Then
		\begin{align}
				\label{E:Holder-u}
				u(\cdot,\cdot) \in C_{\frac{1}{2}\min(\rho, 2)-, \:\frac{1}{2}\min(\Theta-d,2)-}\left((0,\infty)\times\R^d\right),\quad \text{a.s.}.
		\end{align}
\end{theorem}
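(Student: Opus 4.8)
The plan is to obtain moment bounds on the space--time increments of $u$ and then invoke Kolmogorov's continuity theorem. I begin from the mild formulation of \eqref{E:SPDE},
\[
u(t,x) = J_0(t,x) + N(t,x), \qquad N(t,x) := \int_0^t\!\!\int_{\R^d} G(t-r,x-z)\,\sigma\bigl(u(r,z)\bigr)\,W(\ud r,\ud z),
\]
where $G(t,x)$ is the deterministic fundamental solution attached to $\DCap{\beta}+\frac{\nu}{2}(-\Delta)^{\alpha/2}$ together with the Riemann--Liouville integral $I_t^\gamma$, and $J_0$ is the homogeneous solution of \eqref{E:J0}. Hypothesis \eqref{E:InitCt} makes $J_0$ bounded on compacts, and since for $t>0$ the kernel regularizes the initial data, $J_0$ is at least as H\"older regular as the exponents claimed in \eqref{E:Holder-u}; the essential point is therefore the regularity of the stochastic convolution $N$.

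Using the global Lipschitz and linear growth of $\sigma$, together with the uniform-on-compacts moment bounds for $u$ established in \cite{ChHuNu19}, the Burkholder--Davis--Gundy inequality reduces the $p$-th moment of an increment of $N$ to an $L^2$ norm of the corresponding increment of the kernel: for every $p\ge 2$,
\[
\E\bigl[|N(t,x)-N(s,y)|^p\bigr] \le C_p\left(\int_0^\infty\!\!\int_{\R^d}\bigl|G(t-r,x-z)-G(s-r,y-z)\bigr|^2\,\ud z\,\ud r\right)^{p/2},
\]
with $G(\tau,\cdot)\equiv 0$ for $\tau\le 0$. It is cleanest to evaluate the right-hand side in Fourier variables. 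Writing $\widehat G(t,\xi)=t^{\beta+\gamma-1}E_{\beta,\beta+\gamma}\bigl(-\tfrac{\nu}{2}|\xi|^\alpha t^\beta\bigr)$, Plancherel's theorem and the triangle inequality split the integral into a purely spatial increment (fixed time) and a purely temporal increment (fixed space point), which isolates the two target exponents.

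The spatial increment factors through $|\widehat G(t-r,\xi)|^2\,|e^{-i\xi\cdot(x-y)}-1|^2$; using $|e^{i\theta}-1|^2\le C_a|\theta|^a$ for $a\in[0,2]$ and the self-similar substitution $\eta=\xi\,(t-r)^{\beta/\alpha}$, one finds that $\int_0^\infty\!\int_{\R^d}|\widehat G(t-r,\xi)|^2|\xi|^a\,\ud\xi\,\ud r$ is finite precisely when $a<\Theta-d$, the constant $\Theta$ of \eqref{E:RhoTheta} arising from the combination of the spatial decay of $E_{\beta,\beta+\gamma}$ at infinity and the integrability of the time integral as $r\to t$. This yields the spatial bound $|x-y|^{\min(\Theta-d,2)}$ and mirrors the analysis of the $\beta\in(0,1]$ case. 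The temporal increment $\widehat G(t-r,\xi)-\widehat G(s-r,\xi)$ is the main obstacle: for $\beta\in(1,2)$ the function $E_{\beta,\beta+\gamma}(-x)$ is oscillatory and sign-changing, so the complete-monotonicity estimates of \cite{Schneider96CM} used in \cite{ChHuNu19} are no longer available, and direct differentiation in $t$ does not yield a usable bound. Here the local fractional derivative enters: applying the fractional Taylor theorem to $t\mapsto\widehat G(t,\xi)$ expands the increment to fractional order $\tfrac12\min(\rho,2)$, producing a factor $|t-s|^{\frac12\min(\rho,2)}$ times a remainder whose $\ud\xi\,\ud r$ integral stays finite under \eqref{E:Dalang}, via the Fox--H representation of $G$. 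Establishing the requisite fractional smoothness of $\widehat G$ in $t$ with constants uniform in $\xi$, and controlling the resulting space--time integral, is the technical heart of the argument and precisely where the restriction \eqref{E:bg<1} is finally removed.

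Combining the two estimates gives, for all sufficiently large $p$,
\[
\E\bigl[|u(t,x)-u(s,y)|^p\bigr] \le C_p\Bigl(|t-s|^{\frac{p}{2}\min(\rho,2)}+|x-y|^{\frac{p}{2}\min(\Theta-d,2)}\Bigr).
\]
Kolmogorov's continuity theorem then produces a modification of $u$ that is jointly H\"older continuous on compact subsets of $(0,\infty)\times\R^d$ with any temporal exponent strictly below $\tfrac12\min(\rho,2)$ and any spatial exponent strictly below $\tfrac12\min(\Theta-d,2)$; the losses are removed up to (but not including) the endpoints by letting $p\to\infty$, which is exactly the assertion \eqref{E:Holder-u} in the $C_{\cdot-,\cdot-}$ notation.
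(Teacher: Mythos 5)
Your outer scaffolding (mild formulation, BDG plus Lipschitz $\sigma$ and uniform moment bounds, Plancherel, splitting the kernel increment into a purely spatial and a purely temporal part, Kolmogorov's continuity theorem) is exactly the part of the argument that the paper delegates to \cite{ChHuNu19} and \cite{ChenDalang14Holder}, and the spatial estimate you describe is Proposition \ref{P:G-SD}(i), which indeed holds without the restriction \eqref{E:bg<1}. The entire new content of the paper, however, is the temporal $L^2$ increment bound of Proposition \ref{mP:G-SD}, and at that point your proposal only asserts that a fractional Taylor expansion ``produces a factor $|t-s|^{\frac12\min(\rho,2)}$ times a remainder whose integral stays finite,'' explicitly deferring what you yourself call the technical heart. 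That is a genuine gap, not a compression: nothing in your write-up explains why the expansion has the claimed order or why the remainder is integrable, and those are precisely Sections 3--5 of the paper (Lemmas \ref{L:intdevh(s)}, \ref{L:FracInt}, \ref{L:caputo_h} and Proposition \ref{P:Holder_h}).

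Moreover, the place where you propose to apply the fractional Taylor theorem --- pointwise in $\xi$ to $t\mapsto\widehat G(t,\xi)=t^{\beta+\gamma-1}E_{\beta,\beta+\gamma}(-\tfrac{\nu}{2}|\xi|^\alpha t^\beta)$, ``with constants uniform in $\xi$'' --- is the wrong object. For fixed $\xi$ and $t$ bounded away from $0$ this function is smooth (the Mittag-Leffler function is entire), so its local fractional derivative of non-integer order vanishes and a pointwise fractional expansion yields nothing beyond the classical one; the obstruction lives entirely in the integration, because each $t$-derivative lowers the power of $(t-r)$ by one (Lemma \ref{L:fracdi}) and makes the $\ud r$-integral diverge near $r=s=t$ once the order exceeds $\rho$. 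The paper's actual idea is to first perform the $\ud\xi\,\ud r$ integration, writing the temporal increment as $\frac{C_\gamma}{\rho}[t^{\rho}-(t-s)^{\rho}+s^{\rho}]-2h_s(t)$ with $h_s$ as in \eqref{E:defht}, and only then to apply the local fractional derivative and the fractional Taylor theorem (Lemma \ref{L:ftt}) to the one-variable function $h_s$ at $t=s$, showing $\LFD^{q}h_s(s)=0$ for every $q\in(n,\rho)$ and exploiting the exact cancellation of the integer-order Taylor polynomial of $h_s$ against the explicit terms computed in Lemma \ref{L:intdevh(s)}. Without this reorganization your route does not close. A minor additional point: your final displayed moment bound claims the endpoint exponent $\frac{p}{2}\min(\rho,2)$ in time, whereas for $\rho\le 2$ the paper only establishes $(t-s)^{q}$ for each $q<\rho$; this does not affect the open-ended conclusion \eqref{E:Holder-u}, but the endpoint bound is not what is proved.
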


\begin{remark}
		\label{R:Old}
    Under the extra condition $\beta+\gamma\le 1$, we see that $\rho\le 1-d\beta/\alpha<1$ and
    hence, the time exponent $\frac{1}{2}\min(\rho,2)$ reduces to $\rho/2$, which recovers the
    result in \cite{ChHuNu19}; see (1.17) or Theorem 3.3, {\it ibid}.  When $\beta=1$, $\gamma=0$
    and $d=1$, we have a stochastic heat equation with a fractional Laplacian. In this case, the
    exponents become $\left(\left[\frac{1}{2}-\frac{1}{2\alpha}\right]-,\frac{\alpha-1}{2}-\right)$;
    see, e.g., \cite[Proposition 4.4]{ChenDalang15FracHeat} for a reference and Figure \ref{F:1d}.a
    for an illustration.
\end{remark}

\begin{figure}[htb]
	\centering
  \begin{tikzpicture}[scale=2.3]
    \draw[->] (-0.2,0) -- (2.25,0) node[right]{$\alpha$};
    \draw[->] (0,-0.6) -- (0,1.25);
    \draw[very thick] (1,0) -- (2,1/2) node[midway,above,sloped]{space};
    \draw[very thin,dashed] (0,-0.5) -- (1,0);
    \def\inc{0.04}
    \draw (\inc,-0.5) -- (-\inc,-0.5) node[left]{$-1/2$};
    \draw (\inc,0.5) -- (-\inc,0.5) node[left]{$1/2$};
    \draw (\inc,0.25) -- (-\inc,0.25) node[left]{$1/4$};
    \draw (\inc,1) -- (-\inc,1) node[left]{$1$};
    \draw (1,\inc) -- (1,-\inc) node[below]{$1$};
    \draw (1,\inc) -- (1,-\inc) node[below]{$1$};
    \draw (2,\inc) -- (2,-\inc) node[below]{$2$};
    \draw (0.667,\inc) -- (0.667,-\inc) node[below]{$2/3$};
    \draw[domain=1:2, smooth, variable=\x, very thick,dotted] plot ({\x}, {0.5 - 1/(2*\x)});
    \draw[very thin,dashed] (0, 0.25) -- (2,0.25) -- (2,0);
    \draw[very thin,dashed] (0, 0.5) -- (2,0.5) -- (2,0);
    \filldraw (2,0.25) circle (0.03);
    \filldraw (2,0.5) circle  (0.03);
    \draw[thick] (1,0) circle (\inc);
    \node[rotate=11] (a) at (1.7,0.1) {time};
    \node (a) at (1,-0.7) {(a) $\beta=1$};
  \end{tikzpicture}
  \begin{tikzpicture}[scale=2.3]
    \draw[->] (-0.2,0) -- (2.25,0) node[right]{$\beta$};
    \draw[->] (0,-0.6) -- (0,1.25);
    \draw[very thick,dotted] (0.667,0) -- (2,1) node[midway,below,sloped]{time};
    \draw[very thin, dashed] (0,-0.5) -- (0.667,0);
    \def\inc{0.04}
    \draw (\inc,-0.5) -- (-\inc,-0.5) node[left]{$-1/2$};
    \draw (\inc,0.5) -- (-\inc,0.5) node[left]{$1/2$};
    \draw (\inc,0.25) -- (-\inc,0.25) node[left]{$1/4$};
    \draw (\inc,1) -- (-\inc,1) node[left]{$1$};
    \draw (1,\inc) -- (1,-\inc) node[below]{$1$};
    \draw (1,\inc) -- (1,-\inc) node[below]{$1$};
    \draw (2,\inc) -- (2,-\inc) node[below]{$2$};
    \draw (0.667,\inc) -- (0.667,-\inc) node[below]{$2/3$};
    \draw[domain=0.667:2, smooth, variable=\x, very thick] plot ({\x}, {1.5 - 1/\x});
    \draw[very thin,dashed] (0, 1) -- (2,1) -- (2,0);
    \draw[very thin,dashed] (0, 0.25) -- (1,0.25) -- (1,0);
    \draw[very thin,dashed] (0, 0.5) -- (1,0.5) -- (1,0);
    \filldraw (1,0.25) circle (0.03);
    \filldraw (1,0.5) circle  (0.03);
    \draw[thick] (2,1) circle (\inc);
    \draw[thick] (0.667,0) circle (\inc);
    \node[rotate=35] (a) at (1.1,0.73) {space};
    \node (a) at (1,-0.7) {(b) $\alpha=2$};
  \end{tikzpicture}
  \caption{The exponents for the case $\gamma=0$ and $d=1$.}
	\label{F:1d}
\end{figure}
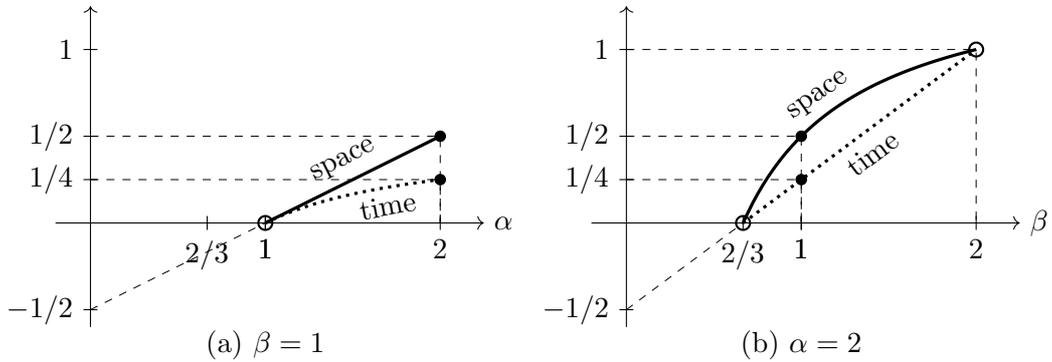

\begin{remark}
  \label{R:Compare}
  When $\gamma=0$, $\alpha=2$ and $d=1$, the results in  \eqref{E:Holder-u}	reduces to
  \begin{align*}
    u(\cdot,\cdot) \in C_{\left(\frac{3}{4}\beta-\frac{1}{2}\right)-, \:\left(\frac{3}{2}-\frac{1}{\beta}\right)-}\left((0,\infty)\times\R^d\right),\quad \text{a.s.};
  \end{align*}
  See Figure \ref{F:1d}.b for the time and space exponents. It is clear that when $\beta=1$, our
  results recovers the standard exponents $(\frac{1}{4}-,\frac{1}{2}-)$ for the stochastic heat
  equation. However, when $\beta=2$, it is known that the stochastic wave equation has the exponents
  $(\frac{1}{2}-,\frac{1}{2}-)$ ; see, e.g., Part (6) of Theorem 4.2.1 in \cite{Chen13thesis}.  On the
  other hand, our results require $\beta$ to be strictly smaller than $2$. But when $\beta$ is very
  close to $2$, our results suggest the exponents to be $(1-,1-)$, which is rather surprising.  This
  phase transition of the H\"older regularity from $\beta<2$ to $\beta=2$ has also been reflected by the
  degeneration of the fundamental solution $x\ne 0\mapsto Y(t,x)$ (see Section \ref{S:proof-T1} for
  the notation) with $t>0$ fixed from a smooth function to a piece-wise continuous function.
\end{remark}

The proof of this theorem relies on a crucial estimate given in Proposition \ref{mP:G-SD} below. In
Section \ref{S:proof-T1}, we introduce some notation and preliminaries including Proposition
\ref{mP:G-SD}, based on which we streamline the proof of Theorem \ref{T:main}. Then we prepare some
auxiliary results in Sections \ref{S:Cont} and \ref{S:Holder}, before we finally prove Proposition
\ref{mP:G-SD} in Section \ref{S:Prop}.

\section{Notation, Preliminaries, and Proof of Theorem \ref{T:main}}
\label{S:proof-T1}

We use $\Ceil{x}$ and $\Floor{x}$ to denote the conventional ceiling and floor functions, respectively.
Recall that (see \cite{ChHuNu19}) the fundamental solutions to \eqref{E:SPDE} consist of the
following three functions:
\begin{gather}
		Z_{\alpha,\beta,d}(t,x)        := \pi^{-d/2} t^{\Ceil{\beta}-1} |x|^{-d} \FoxH{2,1}{2,3}{\frac{ |x|^\alpha}{2^{\alpha-1}\nu t^\beta}}{(1,1),\:(\Ceil{\beta},\beta)} {(d/2,\alpha/2),\:(1,1),\:(1,\alpha/2)},\\
		Z^*_{\alpha,\beta,d}(t,x)      := \pi^{-d/2} |x|^{-d} \FoxH{2,1}{2,3}{\frac{ |x|^\alpha}{2^{\alpha-1}\nu t^\beta}}{(1,1),\:(1,\beta)} {(d/2,\alpha/2),\:(1,1),\:(1,\alpha/2)},\\ \label{E:Yab}
		Y_{\alpha,\beta,\gamma,d}(t,x) := \pi^{-d/2} |x|^{-d}t^{\beta+\gamma-1} \FoxH{2,1}{2,3}{\frac{ |x|^\alpha}{2^{\alpha-1}\nu t^\beta}} {(1,1),\:(\beta+\gamma,\beta)}{(d/2,\alpha/2),\:(1,1),\:(1,\alpha/2)},
\end{gather}
where $\FoxH{2,1}{2,3}{*}{\cdot}{-}$ denotes the \textit{Fox $H$-function} (see, e.g.,
\cite{KilbasSaigo04H}). For simplicity, in the following we will simply write these functions as
$Z$, $Z^*$, and $Y$.  The solution to \eqref{E:SPDE} is understood as the {\it mild solution}
\begin{align*}
		u(t,x) = J_0(t,x) + \iint_{[0,t]\times\R^d} Y(t-s,x-y)\sigma(u(s,y)) W(\ud s, \ud y),
\end{align*}
with the stochastic integral being the \textit{Walsh integral} \cite{Walsh86}, and $J_0(t,x)$ is the
solution to the homogeneous equation, namely,
\begin{align}\label{E:J0}
		J_0(t,x)=
		\begin{cases}
				\left(Z(t,\cdot)*\mu\right)(x)                                      & \text{if $\beta\in(0,1]$,}\\[0.2em]
				\left(Z^*(t,\cdot)*\mu_0\right)(x)+\left(Z(t,\cdot)*\mu_1\right)(x) & \text{if $\beta\in(1,2)$.}\\[0.2em]
		\end{cases}
\end{align}
SPDE's of the above form have been widely studied; see, e.g.,
\cite{ChenDalang15Heat,Dalang99Extending,FoondunKhoshnevisan08Intermittence} and references
therein.
\bigskip

In \cite{ChHuNu19}, Theorem \ref{T:main} was proved in the case of $\beta+ \gamma \le 1$ via the
following continuity results on $Y(t,x)$:

\begin{proposition}[Proposition 5.4 of \cite{ChHuNu19}]
		\label{P:G-SD}
		Suppose $\alpha \in (0,2]$, $\beta\in(0,2)$ and $\gamma\ge 0$. Under Dalang's condition
		\eqref{E:Dalang}, $Y(t,x)$ satisfies the following two properties:

		\noindent (i) For all $0<\theta<(\Theta-d)\wedge 2$ and $T>0$, there is some nonnegative
		constant $C=C(\alpha,\beta,\gamma,\nu,\theta,T,d)$ such that for all $t\in (0,T]$ and
		$x,y\in\R^d$,
		\begin{align}\label{E:G-x}
				\iint_{\R_+\times\R^d}\ud r\ud z\: \left( Y\left(t-r,x-z\right) -Y\left(t-r,y-z\right) \right)^2\le C \: |x-y|^{\theta}.
		\end{align}

		\noindent (ii) If $\beta\le 1$ and $\beta + \gamma \le 1$, then there is some nonnegative
		constant $C=C(\alpha,\beta,\gamma,\nu,d)$ such that for all $s,t\in(0,\infty)$ with $s\le t$,
		and $x\in\R^d$,
		\begin{gather}
				\label{E:G-t1}
				\int_0^s \ud r\int_{\R^d}\ud z \left(Y\left(t-r,x-z\right) -Y\left(s-r,x-z\right) \right)^2\le C (t-s)^{2(\beta+\gamma)-1-d\beta/\alpha}\;,\\
				\label{E:G-t2}
				\int_s^t\ud r\int_{\R^d}\ud z \; Y^2\left(t-r,x-z\right) \le C (t-s)^{2(\beta+\gamma)-1-d\beta/\alpha}\;.
		\end{gather}
\end{proposition}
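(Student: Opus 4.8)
The plan is to pass to the spatial Fourier transform and reduce every $L^2(\R_+\times\R^d)$ quantity to a spectral integral built from the two-parameter Mittag-Leffler function. The starting point is the identity
\[
\widehat{Y}(t,\xi) = t^{\beta+\gamma-1}\,E_{\beta,\beta+\gamma}\!\left(-\tfrac{\nu}{2}|\xi|^\alpha t^\beta\right),
\]
which follows from applying the spatial Fourier transform to \eqref{E:SPDE} and inverting the resulting fractional ODE in $t$ (recorded in \cite{ChHuNu19}; equivalently it is the Fourier image of the Fox-$H$ representation \eqref{E:Yab}). By Plancherel's theorem and translation invariance, the left-hand side of \eqref{E:G-x} becomes $\int_{\R^d}|e^{i\xi\cdot x}-e^{i\xi\cdot y}|^2 K(t,\xi)\,\ud\xi$ with $K(t,\xi):=\int_0^t s^{2(\beta+\gamma-1)}E_{\beta,\beta+\gamma}^2(-\tfrac{\nu}{2}|\xi|^\alpha s^\beta)\,\ud s$, while \eqref{E:G-t1}--\eqref{E:G-t2} reduce to analogous expressions in the same kernel. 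The only fact about the Mittag-Leffler function needed for \eqref{E:G-x} is the uniform bound $|E_{\beta,\beta+\gamma}(-x)|\le C/(1+x)$ for $x\ge0$, valid for all $\beta\in(0,2)$, which is why part (i) holds with no restriction on $\beta$.

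For part (i) I would estimate $K(t,\xi)$ via the substitution $u=\tfrac{\nu}{2}|\xi|^\alpha s^\beta$, which factors out the frequency:
\[
K(t,\xi)=C\,|\xi|^{-\frac{\alpha}{\beta}(2(\beta+\gamma)-1)}\int_0^{U(t,\xi)}u^{\frac{\beta+2\gamma-1}{\beta}}E_{\beta,\beta+\gamma}^2(-u)\,\ud u,\qquad U(t,\xi)=\tfrac{\nu}{2}|\xi|^\alpha t^\beta .
\]
The behaviour at small $u$ is integrable because Dalang's condition forces $2(\beta+\gamma)>1$, and at large $u$ one uses $E^2\sim u^{-2}$. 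The dichotomy $\gamma<1/2$ (the integral converges, so $K\lesssim|\xi|^{-\Theta}$ with $\Theta=2\alpha+\frac{\alpha}{\beta}(2\gamma-1)$) versus $\gamma\ge1/2$ (the integral up to $U$ contributes a factor $U^{(2\gamma-1)/\beta}$, giving $K\lesssim|\xi|^{-2\alpha}=|\xi|^{-\Theta}$) is exactly what the definition of $\Theta$ in \eqref{E:RhoTheta} encodes. Having established $K(t,\xi)\le C_T(1+|\xi|)^{-\Theta}$ uniformly for $t\le T$, I would apply the elementary interpolation $|e^{i\xi\cdot(x-y)}-1|^2\le C(|\xi|\,|x-y|)^\theta$ for $\theta\in[0,2]$; the integral $\int_{\R^d}|\xi|^\theta(1+|\xi|)^{-\Theta}\ud\xi$ is finite precisely when $\theta<(\Theta-d)\wedge2$, yielding \eqref{E:G-x}.

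For part (ii) the decisive extra ingredient is complete monotonicity. Under $\beta\le1$ and $\beta+\gamma\le1$ the map $\tau\mapsto \widehat{Y}(\tau,\xi)$ is nonnegative and nonincreasing in $\tau$ (the prefactor decreases since $\beta+\gamma-1\le0$, and the Mittag-Leffler factor decreases by \eqref{E:CM}). This lets me replace the square of a difference by a difference of squares, $(\widehat{Y}(s-r,\xi)-\widehat{Y}(t-r,\xi))^2\le \widehat{Y}^2(s-r,\xi)-\widehat{Y}^2(t-r,\xi)$, after which the $r$-integration in \eqref{E:G-t1} telescopes into $\int_{\R^d}[H(s,\xi)+H(t-s,\xi)-H(t,\xi)]\,\ud\xi$, where $H(\tau,\xi)=\int_0^\tau\widehat{Y}^2(a,\xi)\,\ud a$. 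The spatial scaling $\eta=\xi\,a^{\beta/\alpha}$ evaluates this exactly, $\int_{\R^d}H(\tau,\xi)\,\ud\xi=c_0\tau^\rho$ with $c_0=\rho^{-1}\int_{\R^d}E_{\beta,\beta+\gamma}^2(-\tfrac{\nu}{2}|\eta|^\alpha)\,\ud\eta<\infty$ (finite since $d<2\alpha$), so \eqref{E:G-t1} is bounded by $c_0[s^\rho+(t-s)^\rho-t^\rho]$; the same scaling applied directly to \eqref{E:G-t2} produces $c_0(t-s)^\rho$. The proof of \eqref{E:G-t1} then closes with the trivial inequality $s^\rho\le t^\rho$ (valid since $0<s\le t$ and $\rho>0$), which gives $s^\rho+(t-s)^\rho-t^\rho\le(t-s)^\rho$.

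I expect the main obstacle to be twofold. In part (i), extracting the \emph{sharp} frequency decay $|\xi|^{-\Theta}$ requires a careful treatment of the two regimes $\gamma<1/2$ and $\gamma\ge1/2$ together with uniform control of the Mittag-Leffler integral over $t\le T$, rather than a single crude bound. In part (ii) the genuine limitation is structural: the step $(a-b)^2\le a^2-b^2$ for $a\ge b\ge0$ requires $\widehat{Y}(\cdot,\xi)$ to be nonincreasing, which forces both $\beta+\gamma\le1$ (so the prefactor $\tau^{\beta+\gamma-1}$ decreases) and $\beta\le1$ (so the Mittag-Leffler factor is completely monotone, by \eqref{E:CM}). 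These are precisely the hypotheses of part (ii), and this monotonicity obstruction is exactly what the present paper circumvents for Theorem \ref{T:main} through the local-fractional-derivative approach.
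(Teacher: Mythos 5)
Your reconstruction is correct and follows the same route as the source: the paper itself does not reprove Proposition \ref{P:G-SD} but imports it verbatim from \cite{ChHuNu19}, and the ingredients you use --- Plancherel with \eqref{E:FY}, the Mittag-Leffler bound \eqref{E:AsypMTL} and the scaling $u=\tfrac{\nu}{2}|\xi|^\alpha s^\beta$ for part (i), and the complete monotonicity \eqref{E:CM} to convert $(a-b)^2\le a^2-b^2$ for part (ii) --- are exactly those the paper attributes to the original proof (see the discussion around \eqref{E:CM} and the identities \eqref{E:int-Y^2} and \eqref{E:defht}, whose telescoping your $H(s,\xi)+H(t-s,\xi)-H(t,\xi)$ recovers). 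The only nitpick is the boundary case $\gamma=\tfrac12$ in part (i), where the inner integral grows like $\log U$ rather than $U^{(2\gamma-1)/\beta}=1$; this costs an arbitrarily small power of $|\xi|$ and is harmless because $\theta<\Theta-d$ is strict.
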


The constraint $\beta+\gamma\le 1$ comes from part (ii). Hence, in order to prove our main result,
we need only to establish a slightly weaker version of $\eqref{E:G-t1}$ as stated in the next
proposition:

\begin{proposition}\label{mP:G-SD}
		Let $\alpha\in (0,2]$, $\beta\in(0,2)$, $\gamma\ge 0$, $0\leq s\le t \leq T<\infty$, $x\in\R^d$,
		and $\rho$ be the constant defined in \eqref{E:RhoTheta}. Assume that Dalang's condition
		\eqref{E:Dalang} holds. If $\rho <2$, then there exists a nonnegative constant
		$C=C(\alpha,\beta,\gamma,\nu,d,T)$ such that for all $q\in(0,\rho)$, it holds that
		\begin{align}
				\label{mE:G-t2}
				\int_0^s \ud r\int_{\R^d}\ud z \left(Y\left(t-r,x-z\right) -Y\left(s-r,x-z\right) \right)^2\le C (t-s)^{q}\;.
		\end{align}
		If $\rho>2$, \eqref{mE:G-t2} is true with $q$ replaced by $2$.
\end{proposition}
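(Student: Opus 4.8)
The plan is to pass to the Fourier side in the space variable, where the fundamental solution has the explicit symbol
\begin{align*}
  \widehat{Y}(t,\xi) = t^{\beta+\gamma-1}\,E_{\beta,\beta+\gamma}\!\left(-\tfrac{\nu}{2}|\xi|^\alpha t^\beta\right),\qquad t>0,\ \xi\in\R^d,
\end{align*}
a known consequence of the representation of $Y$ (see \cite{ChHuNu19}). First I would reduce the claim to a one–dimensional time integral: integration over all of $\R^d$ is translation invariant, so the $x$–dependence drops out, and after the substitution $a=s-r$ with $h:=t-s$ the left-hand side of \eqref{mE:G-t2} becomes $\int_0^s\ud a\int_{\R^d}\ud z\,(Y(a+h,z)-Y(a,z))^2$. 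Plancherel's theorem turns this into $c_d\int_0^s\ud a\int_{\R^d}\ud\xi\,\big|\widehat Y(a+h,\xi)-\widehat Y(a,\xi)\big|^2$. The only facts about the Mittag-Leffler function I will use are the uniform envelope bound $|E_{\beta,b}(-y)|\le C_{\beta,b}/(1+y)$ for $y\ge0$, valid for every $\beta\in(0,2)$ and real $b$ by the sectorial asymptotics, together with its boundedness near $y=0$; crucially, no sign information or complete monotonicity is used, which is exactly what lets the argument run for $\beta\in(1,2)$.

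The engine of the proof is the differentiation identity
\begin{align*}
  \frac{\ud}{\ud a}\Big[a^{b-1}E_{\beta,b}(\lambda a^\beta)\Big]=a^{b-2}E_{\beta,b-1}(\lambda a^\beta),
\end{align*}
which, applied with $b=\beta+\gamma$ and $\lambda=-\tfrac{\nu}{2}|\xi|^\alpha$, gives $\partial_a\widehat Y(a,\xi)=a^{\beta+\gamma-2}E_{\beta,\beta+\gamma-1}(-\tfrac\nu2|\xi|^\alpha a^\beta)$; this genuine classical $t$–derivative of the symbol plays the role of the local fractional derivative on the Fourier side. Rescaling $\eta=\xi a^{\beta/\alpha}$ and using the envelope bound, I would compute the two $L^2_\xi$ norms
\begin{align*}
  \int_{\R^d}\big|\widehat Y(a,\xi)\big|^2\ud\xi = C\,a^{\rho-1},\qquad
  \int_{\R^d}\big|\partial_a\widehat Y(a,\xi)\big|^2\ud\xi = C\,a^{\rho-3},
\end{align*}
both constants being finite precisely because Dalang's condition $d<2\alpha$ makes $\int_{\R^d}|E_{\beta,b}(-c|\eta|^\alpha)|^2\ud\eta$ converge (the integrand decays like $|\eta|^{-2\alpha}$).

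With these in hand, Minkowski's integral inequality gives
\begin{align*}
  \Big(\int_{\R^d}\big|\widehat Y(a+h,\xi)-\widehat Y(a,\xi)\big|^2\ud\xi\Big)^{1/2}
  \le \int_a^{a+h}\Big(\int_{\R^d}|\partial_\tau\widehat Y(\tau,\xi)|^2\ud\xi\Big)^{1/2}\ud\tau
  = C\int_a^{a+h}\tau^{(\rho-3)/2}\ud\tau,
\end{align*}
so that, writing $p=(\rho-1)/2$, the quantity to estimate is $C\int_0^s\big|(a+h)^{p}-a^{p}\big|^2\ud a$. I would evaluate this elementary integral by splitting at $a=h$: on $(0,h)$ the crude bound $|(a+h)^p-a^p|\le (a+h)^p\vee a^p$ contributes $Ch^{2p+1}=Ch^\rho$ (integrable thanks to $\rho>0$), while on $(h,s)$ the mean value theorem gives $|(a+h)^p-a^p|\lesssim a^{p-1}h$, whose square integrates to $Ch^2\int_h^s a^{\rho-3}\ud a$. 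When $\rho<2$ this last integral is governed by its lower limit and yields $Ch^\rho$; when $\rho>2$ it is governed by the upper limit $s\le T$ and yields $Ch^2$. Combining the pieces produces \eqref{mE:G-t2} with the endpoint exponent $q=\rho$ when $\rho<2$ (hence for every $q\in(0,\rho)$) and $q=2$ when $\rho>2$.

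The main obstacle is conceptual rather than computational: for $\beta\in(1,2)$ the symbol $E_{\beta,\beta+\gamma}(-y)$ oscillates and changes sign, so the complete-monotonicity argument of \cite{ChHuNu19} collapses. The device above removes the difficulty by never using the sign of $E$—only the envelope decay, which holds uniformly on $(0,2)$—and by transferring the time increment onto the classically differentiable symbol $\partial_a\widehat Y$. The remaining delicate points are bookkeeping: checking that the rescaled $\xi$–integrals converge (this is exactly $d<2\alpha$), and handling the time singularity at $a=0$ in the final scalar integral, where the competition between the regimes $a<h$ and $a>h$ is what produces the exponent $\rho$ and its saturation at $2$.
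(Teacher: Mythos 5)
Your argument is correct in substance, but it is a genuinely different and considerably more elementary route than the one taken in the paper. The paper expands the square, uses the exact identity \eqref{E:int-Y^2} for the two diagonal terms, and isolates the cross term $h_s(t)$ of \eqref{E:defht}, whose H\"older behaviour at $t=s$ is then extracted through the local fractional derivative machinery: continuity of $h_s^{(n)}$ and the explicit values $h_s'(s)$, $h_s''(s)$ (Lemma \ref{L:intdevh(s)}), vanishing of the LFD of order $q$, and the fractional Taylor theorem (Proposition \ref{P:Holder_h}), followed by a delicate cancellation against the explicit $t^\rho$-type terms. You instead keep the increment intact on the Fourier side, write $\widehat Y(a+h,\xi)-\widehat Y(a,\xi)=\int_a^{a+h}\partial_\tau\widehat Y(\tau,\xi)\,\ud\tau$ using the parameter-lowering identity (which is exactly Lemma \ref{L:fracdi} with $q=-1$, cf.\ Remark \ref{R:fracdi}), and apply Minkowski's integral inequality in $L^2_\xi$ together with the scaling $\Vert\partial_a\widehat Y(a,\cdot)\Vert_{L^2_\xi}^2=Ca^{\rho-3}$; this collapses the whole problem to the scalar integral $\int_0^s\vert(a+h)^{(\rho-1)/2}-a^{(\rho-1)/2}\vert^2\ud a$, whose splitting at $a=h$ correctly produces $h^{\min(\rho,2)}$. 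Both proofs share the essential feature that lets them reach $\beta\in(1,2)$ --- only the envelope bound \eqref{E:AsypMTL} is used, never a sign or complete monotonicity of $E_{\beta,\beta+\gamma}(-\cdot)$ --- but your version trades the LFD/fractional-Taylor apparatus for a classical first derivative plus Minkowski, at the cost of losing the finer asymptotic information the paper obtains (the exact coefficients $\tfrac12 C_\gamma s^{\rho-1}$ and $[C_{\gamma,\gamma-1}-C_{\gamma-1}/(\rho-2)]s^{\rho-2}$ in the expansion of $h_s$); it even yields the endpoint $q=\rho$ when $\rho<2$, slightly more than the proposition asserts. Two boundary points deserve a line each in a written version: when $\rho=1$ the antiderivative of $\tau^{(\rho-3)/2}$ is a logarithm, and the resulting $\int_0^s\log^2(1+h/a)\,\ud a\le Ch$ must be checked directly; and when $\rho\ge 3$ (so $(\rho-1)/2\ge1$) the mean value theorem gives $\vert(a+h)^p-a^p\vert\le p\,(a+h)^{p-1}h$ rather than $p\,a^{p-1}h$, which still yields the $Ch^2$ bound since $a+h\le T$. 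Neither affects the conclusion.
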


This proposition is proved in Section \ref{S:Prop}. Now we are ready to prove our main result.

\begin{proof}[Proof of Theorem \ref{T:main}]
		The proof follows the same arguments as those in the proof of Theorem 3.3 in \cite{ChHuNu19},
		Theorem \ref{T:main} is immediate via equations \eqref{E:G-x} and \eqref{E:G-t2} in Proposition
		\ref{P:G-SD} and Proposition \ref{mP:G-SD}. See also the proof of Proposition 4.3 in
		\cite{ChenDalang14Holder} for a more detailed exposition of these arguments.
\end{proof}
\bigskip

Now let us introduce some more notation for this paper. For any $\gamma_1,\gamma_2\in\R$, denote
\begin{align}
		\label{E:Csharp}
	\begin{aligned}
		C_{\gamma_1,\gamma_2} &:= \frac{2^{1-d+d/ \alpha}}{\Gamma(d/2)\pi^{d/2}\nu^{d/ \alpha}} \int_0^\infty u^{d-1}E_{\beta,\beta+\gamma_1}(-u^\alpha)E_{\beta,\beta+\gamma_2}(-u^\alpha)\ud u \quad \text{and}\quad C_{\gamma}:=C_{\gamma,\gamma},\\
		C_{\gamma_1,\gamma_2}^* &:= \frac{2^{1-d+d/ \alpha}}{\Gamma(d/2)\pi^{d/2}\nu^{d/ \alpha}} \int_0^\infty u^{d-1}\left| E_{\beta,\beta+\gamma_1}(-u^\alpha)E_{\beta,\beta+\gamma_2}(-u^\alpha)\right| \ud u \quad \text{and}\quad C_{\gamma}^*:=C_{\gamma,\gamma}^*,
	\end{aligned}
\end{align}
where $E_{\beta,\beta+\gamma}(\cdot)$ is the two-parameter Mittag-Leffler function and these
constants are well defined provided that $d<2\alpha$ (see Dalang's condition \eqref{E:Dalang})
thanks to the following asymptote property (see, e.g., Theorem 1.6 of \cite{Podlubny99FDE}):
\begin{align}\label{E:AsypMTL}
		|E_{\beta,\zeta}(z)| \leq \frac{A}{1+|z|} \qquad \text{for $\beta \in (0, 2),\: \zeta \in \R,\:  |\arg(z)|=\pi$.}
\end{align}
It is clear that $C_{\gamma_1,\gamma_2}^*>0$ and $C_{\gamma}^* = C_{\gamma}>0$,
but $C_{\gamma_1,\gamma_2}$ may take negative values.  As shown in \cite[Lemma 5.5]{ChHuNu19}, using
the Fourier transform of $Y_{\alpha,\beta,\gamma,d}(t,\cdot)$, namely,
\begin{align}\label{E:FY}
		\calF Y_{\alpha,\beta,\gamma,d}(t,\cdot)(\xi)& = t^{\beta+\gamma-1} E_{\beta,\beta+\gamma}(-2^{-1}\nu t^\beta |\xi|^\alpha),
\end{align}
and the Plancherel theorem, we have
\begin{align}
		\label{E:int-Y^2}
		\int_{\R^d} Y_{\alpha,\beta,\gamma,d}^2(t,x) \ud x = C_{\gamma} \: t^{2(\beta+\gamma-1)-d\beta/\alpha},
		\quad \text{for all $t>0$.}
\end{align}
Following the proof of part (ii) of Proposition 5.4 in \cite{ChHuNu19} or by \eqref{E:int-Y^2}, we have that
\begin{align*}
		\int_0^s \ud r\int_{\R^d}\ud z \left[ Y\left(t-r,x-z\right) -Y\left(s-r,x-z\right)\right ]^2
		= \frac{ C_{\gamma}}{\rho}\left[t^{\rho}-(t-s)^{\rho} + s^{\rho}\right]-2h_s(t),
\end{align*}
where
\begin{align}
		\label{E:defht}
		\begin{aligned}
				h_s(t):= \frac{1}{(2\pi)^d}\int_0^s\ud r\int_{\R^d}\ud \xi
				\quad  & (t-r)^{\beta+\gamma-1} E_{\beta,\beta+\gamma}\left(-2^{-1}\nu(t-r)^\beta |\xi|^\alpha \right) \\
				\times & (s-r)^{\beta+\gamma-1}E_{\beta,\beta+\gamma}\left(-2^{-1}\nu(s-r)^\beta |\xi|^\alpha\right).
		\end{aligned}
\end{align}
For all $t,s\ge r$, $\xi\in\R^d$ and $\delta\in\R$, denote
\begin{align}
		\label{E:frackE}
		\begin{aligned}
				\mathfrak{E}(t,\delta,r,\xi)
				:= & \frac{1}{(2\pi)^d}(t-r)^{\delta-1} E_{\beta,\delta}\left(-2^{-1}\nu(t-r)^\beta |\xi|^\alpha \right) \\
           & \times (s-r)^{\beta+\gamma-1}E_{\beta,\beta+\gamma}\left(-2^{-1}\nu(s-r)^\beta |\xi|^\alpha\right).
		\end{aligned}
\end{align}
To obtain the estimation in Proposition \ref{mP:G-SD}, it boils down to the H\"older continuity of
$[s,T]\ni t\mapsto h_s(t)$ at $t=s$. Note that it is more convenient to consider $h_s(t)$ as a
function of $t$ with $s$ fixed than the other way round. We will investigate H\"older continuity of
$h_s(t)$ at $t=s$ via its \textit{local fractional derivative} (LFD) in Section \ref{S:Holder}, which
will be used to prove Proposition \ref{mP:G-SD} in Section \ref{S:Prop}.
\bigskip

Finally, we will need the following two lemmas:

\begin{lemma}[Formula 23 of Table 9.1 on p. 173 of \cite{SamkoKilbasMarichev93}]
		\label{L:fracdi}
		For $\Re(\beta)>0$, $\Re(\alpha)>0$, $q\in\R$, and $\lambda>0$, it holds that
		\begin{align}
				\label{E:fracdi}
				\left( I_{r+}^{q}\left[ (\cdot-r)^{\beta-1} E_{\alpha, \beta}\left( \lambda (\cdot-r)^{\alpha}\right)\right ]\right)(t) = (t-r)^{\beta+q-1} E_{\alpha, \beta+q}\left(\lambda (t-r)^{\alpha}\right)
				\quad \text{for $t>r$},
		\end{align}
		where we use the convention that $I_{r+}^{q}= D_{r+}^{-q}$ for $q<0$; see \eqref{E:RL-Dev-Def}
		below for the notation.
\end{lemma}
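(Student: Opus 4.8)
The plan is to prove \eqref{E:fracdi} by expanding the Mittag-Leffler function into its defining power series and applying the fractional operator term by term, thereby reducing the whole identity to the single elementary rule for the fractional integration (and differentiation) of a pure power. By the translation invariance of $I_{r+}^{q}$ I would first set $r=0$ without loss of generality, and write the integrand as
\[
  t^{\beta-1}E_{\alpha,\beta}(\lambda t^{\alpha}) = \sum_{k=0}^{\infty} \frac{\lambda^{k}}{\Gamma(\alpha k+\beta)}\, t^{\alpha k+\beta-1}.
\]
Since $\Re(\beta)>0$, each exponent satisfies $\Re(\alpha k+\beta-1)>-1$, so every summand is locally integrable near $0$, and the series converges absolutely and locally uniformly on compact $t$-intervals (it is an entire function multiplied by a fixed power). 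The workhorse is the classical power rule $I_{0+}^{q}\!\left[(\cdot)^{\mu-1}\right](t)=\frac{\Gamma(\mu)}{\Gamma(\mu+q)}\,t^{\mu+q-1}$, valid for $\Re(\mu)>0$, which follows for $q>0$ from the Beta integral $\int_0^t(t-\tau)^{q-1}\tau^{\mu-1}\,\mathrm d\tau = t^{\mu+q-1}\frac{\Gamma(q)\Gamma(\mu)}{\Gamma(\mu+q)}$.

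For the case $q\ge 0$ I would apply this rule with $\mu=\alpha k+\beta$ to each term. The interchange of the sum with $I_{0+}^{q}$ is justified because $I_{0+}^{q}$ is a bounded linear operator on $C[0,T]$ (equivalently on $L^1(0,T)$) and the series converges in that space; after interchanging, the gamma factors telescope,
\[
  \sum_{k=0}^{\infty}\frac{\lambda^{k}}{\Gamma(\alpha k+\beta)}\cdot\frac{\Gamma(\alpha k+\beta)}{\Gamma(\alpha k+\beta+q)}\,t^{\alpha k+\beta+q-1}
  = t^{\beta+q-1}\sum_{k=0}^{\infty}\frac{\lambda^{k}t^{\alpha k}}{\Gamma(\alpha k+\beta+q)}
  = t^{\beta+q-1}E_{\alpha,\beta+q}(\lambda t^{\alpha}),
\]
which is exactly the right-hand side of \eqref{E:fracdi} (after translating back to $r$).

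For $q<0$, set $p=-q>0$ and $m=\lceil p\rceil$, and use the convention $I_{0+}^{q}=D_{0+}^{p}$ together with the factorization $D_{0+}^{p}=\frac{\mathrm d^{m}}{\mathrm d t^{m}}\,I_{0+}^{m-p}$ (see \eqref{E:RL-Dev-Def}). I would first apply $I_{0+}^{m-p}$, an integral of nonnegative order $m-p\in[0,1)$, term by term as in the previous paragraph, producing the locally uniformly convergent power series $\sum_k \frac{\lambda^k}{\Gamma(\alpha k+\beta+m-p)}\,t^{\alpha k+\beta+m-p-1}$; then I would differentiate this series $m$ times termwise, which is legitimate because a power series may be differentiated term by term within its (here infinite) radius of convergence. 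Each differentiation contributes the falling-factorial coefficient that reassembles into $\frac{\Gamma(\alpha k+\beta+m-p)}{\Gamma(\alpha k+\beta-p)}$, so the net effect is again the single-power rule $D_{0+}^{p}[(\cdot)^{\mu-1}](t)=\frac{\Gamma(\mu)}{\Gamma(\mu-p)}t^{\mu-p-1}$ applied termwise, and the sum collapses to $t^{\beta-p-1}E_{\alpha,\beta-p}(\lambda t^{\alpha})=t^{\beta+q-1}E_{\alpha,\beta+q}(\lambda t^{\alpha})$. Any term for which $\alpha k+\beta+q$ is a nonpositive integer simply drops out, consistently on both sides, because $1/\Gamma$ vanishes there.

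The main obstacle is purely the rigor of the termwise operations in the $q<0$ case, where ordinary differentiation interacts with the fractional integral; the "integrate first to a power series, then differentiate the power series" route above is the cleanest way to secure it. As an alternative that avoids repeating the derivative bookkeeping, I could prove the identity for $q>0$ as above, observe that both sides of \eqref{E:fracdi} are entire functions of $q$ (the convergence estimates are locally uniform in $q$), and then extend to all $q\in\R$ by the identity theorem for analytic functions.
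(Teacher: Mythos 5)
Your proof is correct, but note that the paper does not actually prove this lemma: it is quoted, as a known formula, from Table 9.1 (Formula 23, p.~173) of Samko--Kilbas--Marichev, so the comparison here is between your self-contained derivation and a bare citation. Your route --- expanding $E_{\alpha,\beta}$ in its defining series and applying the power rule $I_{0+}^{q}\left[(\cdot)^{\mu-1}\right](t)=\frac{\Gamma(\mu)}{\Gamma(\mu+q)}\,t^{\mu+q-1}$ termwise, with the $q<0$ case handled by writing $D_{0+}^{p}=\frac{\mathrm{d}^{m}}{\mathrm{d}t^{m}}\,I_{0+}^{m-p}$ and then differentiating the resulting series termwise on $(0,\infty)$ --- is exactly the standard derivation underlying such table entries, and the key points are all in place: the gamma factors telescope, the falling-factorial bookkeeping for the $m$ ordinary derivatives is right, and you correctly note that terms where $\alpha k+\beta+q$ is a nonpositive integer drop out consistently on both sides because $1/\Gamma$ vanishes there. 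Two small refinements would tighten it: (i) when $\Re(\beta)<1$ the series does not converge in $C[0,T]$ (each term is unbounded at the origin), so the interchange should be run in $L^{1}(0,T)$, where $I_{0+}^{q}$ is bounded by Young's inequality --- or, even more cheaply for $q>0$, observe that $\lambda>0$ makes every term nonnegative, so Tonelli's theorem gives the interchange outright; (ii) your alternative closing argument via analyticity in $q$ is not free on the left-hand side, since analyticity of $q\mapsto (I_{0+}^{q}f)(t)$ across $q\le 0$ under the convention $I^{q}=D^{-q}$ is essentially equivalent to the composition identities you are trying to bypass, so the direct computation should remain the main line. What your approach buys is a paper that does not lean on an external table; what the citation buys is brevity.
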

\begin{remark}
		\label{R:fracdi}
		When $q$ is a negative integer, $I_{r+}^{q}$ becomes the conventional derivative, which ensures
		that \eqref{E:fracdi} also holds for $\beta<0$.
\end{remark}

\begin{lemma}
		\label{L:Y^2}
		Let $1\le d<2\alpha$, $\beta\in (0,2)$, $\delta,\delta'\in\R$. Then for
		all $t,s>0$,
		\begin{align*}
				\int _{\R^d}\left| E_{\beta ,\beta+\delta }\left( -\dfrac {1}{2}\nu t^{\beta }|x|^{\alpha } \right) \times
				E_{\beta,\beta+\delta'}\left(-\dfrac {1}{2}\nu s^{\beta}|x|^{\alpha} \right)\right| \ud x\leq
				C_{\delta,\delta'}\left[ t\cdot s\right ] ^{-\frac {\beta}{2\alpha }d}.
		\end{align*}
\end{lemma}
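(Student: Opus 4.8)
The plan is to decouple the two different time scales $t$ and $s$ (which is the only reason the statement is not an immediate consequence of an earlier identity) by the Cauchy--Schwarz inequality, and then to evaluate the two resulting diagonal integrals exactly. Write $f_t(x):=E_{\beta,\beta+\delta}\!\left(-\tfrac12\nu t^\beta|x|^\alpha\right)$ and $g_s(x):=E_{\beta,\beta+\delta'}\!\left(-\tfrac12\nu s^\beta|x|^\alpha\right)$, so that the left-hand side is $\int_{\R^d}|f_t(x)g_s(x)|\,\ud x$. Cauchy--Schwarz gives
\[
\int_{\R^d}|f_t g_s|\,\ud x \;\le\; \left(\int_{\R^d}f_t^2\,\ud x\right)^{1/2}\left(\int_{\R^d}g_s^2\,\ud x\right)^{1/2}.
\]
This is precisely the step that removes the obstruction that the two Mittag-Leffler factors carry different arguments ($t^\beta$ versus $s^\beta$): after the splitting each factor involves a single time variable, and the two halves are computed independently.

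For each diagonal factor I would use \eqref{E:FY}, which identifies $t^{\beta+\delta-1}f_t$ as the spatial Fourier transform $\calF Y_{\alpha,\beta,\delta,d}(t,\cdot)$. By Plancherel and the exact identity \eqref{E:int-Y^2} (applied with $\gamma=\delta$), this yields
\[
\int_{\R^d}f_t^2\,\ud x \;=\; (2\pi)^d\, t^{2(1-\beta-\delta)}\int_{\R^d}Y_{\alpha,\beta,\delta,d}^2(t,x)\,\ud x \;=\; (2\pi)^d\, C_{\delta,\delta}\; t^{-\beta d/\alpha},
\]
and similarly $\int_{\R^d}g_s^2\,\ud x=(2\pi)^d C_{\delta',\delta'}\,s^{-\beta d/\alpha}$. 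Both are finite exactly under Dalang's condition $d<2\alpha$: via the asymptotic decay \eqref{E:AsypMTL} the integrands in the defining integrals of $C_{\delta,\delta}$ and $C_{\delta',\delta'}$ behave like $u^{d-1-2\alpha}$ at infinity, which is integrable precisely when $d<2\alpha$. Combining the three displays gives
\[
\int_{\R^d}|f_t g_s|\,\ud x \;\le\; (2\pi)^d\sqrt{C_{\delta,\delta}\,C_{\delta',\delta'}}\;[t\cdot s]^{-\frac{\beta}{2\alpha}d},
\]
which is the asserted bound, the constant being the finite $\delta,\delta'$-dependent quantity $(2\pi)^d\sqrt{C_{\delta,\delta}C_{\delta',\delta'}}$; note that $C^*_{\delta,\delta'}\le\sqrt{C_{\delta,\delta}C_{\delta',\delta'}}$ by Cauchy--Schwarz applied to the defining integral in \eqref{E:Csharp}, which is consistent with the absolute value on the left.

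An alternative, fully elementary route avoids \eqref{E:int-Y^2} altogether: bound each factor by \eqref{E:AsypMTL}, pass to polar coordinates, and after the rescaling $|x|=(ts)^{-\beta/(2\alpha)}\rho$ (up to the constant $(2/\nu)^{1/\alpha}$) use the algebraic inequality $(1+\kappa\rho^\alpha)(1+\kappa^{-1}\rho^\alpha)\ge(1+\rho^\alpha)^2$ with $\kappa=(t/s)^{\beta/2}$; this reduces everything to the single $(t,s)$-\emph{independent} integral $\int_0^\infty \rho^{d-1}(1+\rho^\alpha)^{-2}\,\ud\rho$, again finite iff $d<2\alpha$. There is no deep difficulty here beyond bookkeeping; the only genuine subtlety to keep in mind is that for $\beta\in(1,2)$ the Mittag-Leffler functions oscillate and change sign, so $C_{\delta,\delta'}$ may be negative and the absolute values on the left are essential. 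The Cauchy--Schwarz argument sidesteps this cleanly by reducing to the manifestly nonnegative diagonal integrals $\int f_t^2$ and $\int g_s^2$, where the sign of the Mittag-Leffler function is irrelevant.
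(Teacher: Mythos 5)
Your proof is correct and is essentially the paper's own argument: the paper's one-line proof invokes exactly the Cauchy--Schwarz inequality followed by the diagonal computation of Lemma 5.5 of \cite{ChHuNu19} (i.e.\ \eqref{E:FY} and \eqref{E:int-Y^2} applied with $\gamma=\delta$ and $\gamma=\delta'$). Your remark about the constant is also on target --- the bound actually obtained, and the one the paper itself uses later (cf.\ $C_{*}'=\sqrt{C_{\gamma-n-1}C_{\gamma}}$ in the proof of Lemma \ref{L:caputo_h}), is $\sqrt{C_{\delta}C_{\delta'}}$ rather than the printed $C_{\delta,\delta'}$, which could be negative.
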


The proof of this lemma follows immediately from the arguments of \cite[Lemma 5.5]{ChHuNu19} with an
application of the Cauchy-Schwarz inequality; note that the condition $\beta\in(0,2)$ is required
due to \eqref{E:AsypMTL}.

\section{Continuity of \texorpdfstring{$h_s^{(n)}(t)$}{}}
\label{S:Cont}

Throughout of the rest of the article, unless specified otherwise, let $q$ and
$n$ be given by
\begin{align}
		\label{E:nq}
		0\leq n<q<\rho\leq n+1 \qquad\text{ with $n\in \mathbb{Z}_+$.}
\end{align}
The aim of this section is to show that $h_s^{(n)}(t)$, defined in \eqref{E:defht}, is continuous
for $t\in[s,T]$ and compute $h_s'(t)$ and $h_s''(t)$ at $t=s$, in case of $\rho>1$ and $\rho>2$,
respectively.

\begin{lemma}\label{L:intdevh(s)}
		Assuming \eqref{E:nq}, for any $0<s<T<\infty$, it holds that $h_s^{(n)}(\cdot)\in C([s,T])$. Moreover,
		\begin{align*}
				h_s^{(n)}(t)\Big|_{t=s} =
				\begin{cases}
						C_{\gamma} \: \frac12 s^{\rho-1}                                         & \text{with $n=1$ when $\rho>1$}, \\[1em]
						\left[ C_{\gamma,\gamma-1}-\frac{C_{\gamma-1}}{\rho-2} \right]s^{\rho-2} & \text{with $n=2$ when $\rho>2$}, \\
				\end{cases}
		\end{align*}
		where $C_{\gamma}$ and $C_{\gamma_1,\gamma_2}$ are defined in \eqref{E:Csharp}.
\end{lemma}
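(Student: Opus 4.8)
The plan is to obtain $h_s^{(n)}$ by repeatedly differentiating \eqref{E:defht} under the integral sign. The $t$–dependence of the integrand enters only through the factor $(t-r)^{\beta+\gamma-1}E_{\beta,\beta+\gamma}\!\left(-2^{-1}\nu(t-r)^\beta|\xi|^\alpha\right)$, and Lemma~\ref{L:fracdi} with $q=-1$ (which is the ordinary derivative and remains valid when the second Mittag–Leffler index drops below zero, by Remark~\ref{R:fracdi}) gives $\partial_t\big[(t-r)^{\delta-1}E_{\beta,\delta}(\lambda(t-r)^\beta)\big]=(t-r)^{\delta-2}E_{\beta,\delta-1}(\lambda(t-r)^\beta)$. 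Iterating this $n$ times and comparing with \eqref{E:frackE}, I expect the representation
\[
  h_s^{(n)}(t)=\int_0^s\ud r\int_{\R^d}\ud\xi\;\mathfrak{E}(t,\beta+\gamma-n,r,\xi),\qquad t\in(s,T].
\]
First I would establish this on the open interval $(s,T]$ by induction on $n$, justifying each differentiation under the integral by dominated convergence.

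The domination rests on Lemma~\ref{L:Y^2}. Applied with the indices $\gamma-n$ and $\gamma$ and the times $t-r,\,s-r$, it yields $\int_{\R^d}|\mathfrak{E}(t,\beta+\gamma-n,r,\xi)|\,\ud\xi\le C(2\pi)^{-d}(t-r)^{\beta+\gamma-n-1-\frac{d\beta}{2\alpha}}(s-r)^{\beta+\gamma-1-\frac{d\beta}{2\alpha}}$. Since $\beta+\gamma-\frac{d\beta}{2\alpha}=\frac{\rho+1}{2}$ by \eqref{E:RhoTheta}, this bound is $C(2\pi)^{-d}(t-r)^{\frac{\rho-1}{2}-n}(s-r)^{\frac{\rho-1}{2}}$. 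For fixed $t_0\in(s,T]$ the right-hand side is, for $t$ near $t_0$, dominated by an integrable function of $r$ (the only singularity is $(s-r)^{\frac{\rho-1}{2}}$ near $r=s$, integrable as $\rho>0$), which legitimizes the differentiation and gives continuity on $(s,T]$.

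The delicate point, and what I expect to be the main technical obstacle, is continuity at the left endpoint $t=s$, where $(t-r)^{\frac{\rho-1}{2}-n}$ can blow up as $r\uparrow s=t$. At $t=s$ the combined integrand behaves like $(s-r)^{\rho-1-n}$, which is integrable because \eqref{E:nq} gives $\rho>n$. To pass the limit $t\downarrow s$ I would construct a $t$-uniform dominating function: when $\frac{\rho-1}{2}-n\le 0$ the monotonicity $(t-r)^{\frac{\rho-1}{2}-n}\le(s-r)^{\frac{\rho-1}{2}-n}$ bounds the integrand by $(s-r)^{\rho-1-n}$ independently of $t$, while when $\frac{\rho-1}{2}-n\ge 0$ the factor $(t-r)^{\frac{\rho-1}{2}-n}$ is bounded by $T^{\frac{\rho-1}{2}-n}$; in either case dominated convergence applies. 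This proves $h_s^{(n)}\in C([s,T])$ and, via the elementary fact that a function continuous on $[s,T]$ whose derivative extends continuously to $s$ is differentiable there, identifies $h_s^{(n)}(s)$ with the $t=s$ value of the representation.

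It then remains to evaluate that value. Setting $t=s$ makes both Mittag–Leffler arguments equal to $-2^{-1}\nu(s-r)^\beta|\xi|^\alpha$, so the $\xi$–integral is computed exactly: polar coordinates and the substitution $u=(2^{-1}\nu a^\beta)^{1/\alpha}|\xi|$ turn the definition \eqref{E:Csharp} into $\int_{\R^d}E_{\beta,\beta+\gamma_1}(-2^{-1}\nu a^\beta|\xi|^\alpha)E_{\beta,\beta+\gamma_2}(-2^{-1}\nu a^\beta|\xi|^\alpha)\,\ud\xi=(2\pi)^d a^{-d\beta/\alpha}C_{\gamma_1,\gamma_2}$. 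With $a=s-r$ and index $\gamma-m$ this gives $h_s^{(m)}(s)=C_{\gamma-m,\gamma}\int_0^s(s-r)^{\rho-1-m}\ud r=\frac{C_{\gamma-m,\gamma}}{\rho-m}s^{\rho-m}$ for $m=1,2$, valid since $\rho>m$. Finally I would convert $C_{\gamma-m,\gamma}$ into the stated form using the recurrence $E_{\beta,b-1}(z)=(b-1)E_{\beta,b}(z)+\beta z\,E_{\beta,b}'(z)$: with $z=-u^\alpha$ the operator $\beta z\tfrac{d}{dz}$ becomes $\tfrac{\beta}{\alpha}u\tfrac{d}{du}$, and integration by parts in $u$ (boundary terms vanishing because $d\ge 1$ and, via \eqref{E:AsypMTL}, $d<2\alpha$ force $u^dE^2\to0$ at both ends) yields $C_{\gamma-1,\gamma}=\tfrac{\rho-1}{2}C_\gamma$ and, applying the recurrence twice, $C_{\gamma-2,\gamma}=(\rho-2)C_{\gamma,\gamma-1}-C_{\gamma-1}$. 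Substituting these produces exactly $h_s'(s)=\tfrac12 C_\gamma s^{\rho-1}$ and $h_s''(s)=\big[C_{\gamma,\gamma-1}-\frac{C_{\gamma-1}}{\rho-2}\big]s^{\rho-2}$; this algebraic reduction is the computational heart that makes the clean closed forms appear.
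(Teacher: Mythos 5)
Your proof is correct, and its first half (the representation $h_s^{(n)}(t)=\int_0^s\ud r\int_{\R^d}\ud\xi\;\mathfrak{E}(t,\beta+\gamma-n,r,\xi)$ together with the domination that gives continuity up to $t=s$) follows the paper's route almost exactly: the paper derives the same bound $(t-r)^{\frac{\rho-1}{2}-n}(s-r)^{\frac{\rho-1}{2}}$ via Cauchy--Schwarz (equivalently Lemma~\ref{L:Y^2}), and instead of your two-case analysis it shows once and for all that $\frac{\rho-1}{2}-n\le 0$ under \eqref{E:nq} (its \eqref{E:neg-coef}), so your second case is vacuous but harmless. One small point of rigor: a bound on $\int_{\R^d}|\mathfrak{E}|\,\ud\xi$ alone does not license differentiating under the inner $\ud\xi$-integral; the paper first uses the pointwise bound $|\mathfrak{E}(t,\beta+\gamma-n,r,\xi)|\le C\left(1+2^{-1}\nu(s-r)^\beta|\xi|^\alpha\right)^{-2}$ from \eqref{E:AsypMTL} to justify the $\xi$-interchange and only then the Lemma~\ref{L:Y^2}-type bound for the $r$-interchange; you should make this two-step structure explicit. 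Where you genuinely diverge is the evaluation of $h_s'(s)$ and $h_s''(s)$. The paper stays in the $r$-variable: it recognizes $\mathfrak{E}(s,\beta+\gamma-m,r,\xi)$ as an exact $r$-derivative via Lemma~\ref{L:fracdi} and integrates by parts in $r$, which produces the boundary term $C_{\gamma,\gamma-1}s^{\rho-2}$ and the bulk term $-\frac{C_{\gamma-1}}{\rho-2}s^{\rho-2}$ directly. You instead integrate out $\xi$ first, obtaining $h_s^{(m)}(s)=\frac{C_{\gamma-m,\gamma}}{\rho-m}s^{\rho-m}$, and then reduce $C_{\gamma-m,\gamma}$ by the recurrence $E_{\beta,b-1}(z)=(b-1)E_{\beta,b}(z)+\beta z E_{\beta,b}'(z)$ plus integration by parts in the radial frequency variable; I checked that this yields $C_{\gamma-1,\gamma}=\frac{\rho-1}{2}C_\gamma$ and $C_{\gamma-2,\gamma}=(\rho-2)C_{\gamma,\gamma-1}-C_{\gamma-1}$ (the boundary terms vanish precisely because $1\le d<2\alpha$), which reproduces the stated formulas. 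The two integrations by parts are Fourier-dual to one another; your version has the merit of making the identities among the constants $C_{\gamma_1,\gamma_2}$ explicit, while the paper's avoids any manipulation of Mittag--Leffler recurrences.
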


\begin{proof}
		In the proof, we will use $C$ to denote a generic constant that may change its value at each
		appearance. When $ \rho>n\geq 0$, we first claim that one can switch the double integrals and
		differentiation in order to have that
		\begin{align}
				\label{E:h^(n)}
				h_s^{(n)}(t)=\int_0^s \ud r\int_{\R^d} \ud \xi\:\: &\dv{^n}{t^n} \: \mathfrak{E}(t,\beta+\gamma,r,\xi), \quad \text{for all $t\in[s,T]$.}
		\end{align}
		Indeed, apply Lemma \ref{L:fracdi} with $q=-n$ (see Remark \ref{R:fracdi}) to see that,
		\begin{align*}
				\left|\: \dv{^n}{t^n} \: \mathfrak{E}(t,\beta+\gamma,r,\xi)\right|
				& = \left|\mathfrak{E}(t,\beta+\gamma-n,r,\xi)\right| \\
				& \le C \frac{1}{1+2^{-1}\nu (t-r)^\beta |\xi|^\alpha} \times \frac{1}{1+2^{-1}\nu (s-r)^\beta |\xi|^\alpha} \\
				& \le C \left( 1+2^{-1}\nu (s-r)^\beta |\xi|^\alpha \right)^{-2},
		\end{align*}
		where we have applied the asymptotic property \eqref{E:AsypMTL}. Notice that the upper bound
		does not depend on $t$ and is $\ud \xi$ integrable:
		\begin{align*}
				\int_{\R^d}	\left( 1+2^{-1}\nu (s-r)^\beta |\xi|^\alpha \right)^{-2} \ud\xi
				= C (s-r)^{-\frac{d\beta}{\alpha}}\int_0^\infty \frac{u^{d-1}}{(1+u^\alpha)^2}\ud u,
		\end{align*}
		which is finite because $d<2\alpha$. Therefore, one can apply Fubini's theorem to see that
		\begin{align}
				\label{E_:Fubini_1}
				\int_{\R^d} \ud \xi\:\: \dv{^n}{t^n} \: \mathfrak{E}(t,\beta+\gamma,r,\xi)
				= \dv{^n}{t^n} \int_{\R^d} \ud \xi\: \mathfrak{E}(t,\beta+\gamma,r,\xi)
				, \quad \text{for all $t\in[s,T]$.}
		\end{align}

    Moreover, notice that, by the Cauchy-Schwarz inequality for the $\ud \xi$-integral,
		\begin{align*}
				\left| \dv{^n}{t^n} \: \int_{\R^d}\ud\xi \: \mathfrak{E}(t,\beta+\gamma,r,\xi)\right|
        & \le \int_{\R^d}\ud \xi \left| \mathfrak{E}(t,\beta+\gamma-n,r,\xi) \right|                                                                   \\
        & \le \frac{1}{(2\pi)^d} (t-r)^{\beta+\gamma-n-1}(s-r)^{\beta+\gamma-1}                                                                      \\
        & \quad \times \left(\int_{\R^d}\ud \xi \:\left| E_{\beta,\beta+\gamma-n}\left(-2^{-1}\nu(t-r)^\beta|\xi|^\alpha\right)\right| \right)^{1/2} \\
        & \quad \times \left(\int_{\R^d}\ud \xi \:\left| E_{\beta,\beta+\gamma}\left(-2^{-1}\nu(s-r)^\beta|\xi|^\alpha\right)\right|\right)^{1/2}    \\
        & = C_* (t-r)^{\beta+\gamma-n-1-\frac{d\beta}{2\alpha}}(s-r)^{\beta+\gamma-1-\frac{d\beta}{2\alpha}},
		\end{align*}
		where the last equality is obtained through change of variables followed by applications of
		\eqref{E:Csharp}, and $C_*:=\sqrt{C_{\gamma-n} C_{\gamma}}$. We claim that
		\begin{align}
				\label{E:neg-coef}
			  \beta+\gamma-n-1-\frac{d\beta}{2\alpha}\le 0.
		\end{align}
		Otherwise, we will have $\beta+\gamma-1-\frac{d\beta}{2\alpha}> 0$ as well, which implies that
		\begin{align*}
				0<
				\left(\beta+\gamma-1-\frac{d\beta}{2\alpha}\right) +
			  \left( \beta+\gamma-n-1-\frac{d\beta}{2\alpha} \right)
				= \rho - n -1,
		\end{align*}
		which contradicts \eqref{E:nq}. Thanks to \eqref{E:neg-coef}, we can replace $t$ by $s$ to see
		that
		\begin{align*}
				\left| \dv{^n}{t^n} \: \int_{\R^d}\ud\xi \: \mathfrak{E}(t,\beta+\gamma,r,\xi)\right|
				\le C_* (s-r)^{2(\beta+\gamma)-n-2-\frac{d\beta}{\alpha}}
				= C_* (s-r)^{\rho-n-1},
		\end{align*}
		with the right-hand side both independent of $t$ and integrable with respect to $\ud r$:
		\begin{align*}
				\int_0^s C_* (s-r)^{\rho-n-1}\ud r= \frac{C_*}{\rho-n}s^{\rho-n},
		\end{align*}
		which is finite because $\rho>n$.
		Therefore, one can apply Fubini's theorem again to see that, for all $t\in[s,T]$,
		\begin{align}
				\label{E_:Fubini_2}
				\int_0^s\ud r\: \dv{^n}{t^n} \: \int_{\R^d}\ud\xi \: \mathfrak{E}(t,\beta+\gamma,r,\xi)
				=  \dv{^n}{t^n} \: \int_0^s\ud r \int_{\R^d}\ud\xi \: \mathfrak{E}(t,\beta+\gamma,r,\xi).
		\end{align}
		A combination of \eqref{E_:Fubini_1} and \eqref{E_:Fubini_2} proves \eqref{E:h^(n)}. Moreover,
		the bounds obtained from the above two Cases I and II prove the following useful bound
		\begin{align}
				\label{E:intbdd_h^(n)}
				\int_0^s \ud r\int_{\R^d}\ud\xi \:\left|\: \dv{^n}{t^n} \: \mathfrak{E}(t,\beta+\gamma,r,\xi)\right|
				\le \frac{C_*}{\rho-n}s^{\rho-n}<\infty.
		\end{align}
		Another direct consequence of \eqref{E:h^(n)} is that $t\mapsto h^{(n)}_s(t)$ is a continuous
		function on $[s,T]$.  \bigskip

		Now we are ready to calculate $h_s'(t)|_{t=s}$ and $h_s''(t)|_{t=s}$ by passing the derivatives all
		the way inside the double integrals. In case of $ \rho>1$, we apply Lemma \ref{L:fracdi} to \eqref{E:h^(n)}
		for $n=1$ to see that
		\begin{align*}
			h_s'(t)\Big|_{t=s}=\int_{\R^d}\ud \xi\int^s_0\ud r\: & \mathfrak{E}(t,\beta+\gamma-1,r,\xi).
		\end{align*}
		Thanks to Lemma \ref{L:fracdi}, one can integrate $\ud r$ to see that
		\begin{align*}
				h_s'(t)\big|_{t=s}
				= \frac{1}{(2\pi)^d}\int_{\R^d} \frac12 \left[ s^{\beta+\gamma-1} E_{\beta,\beta+\gamma}\left(-2^{-1}\nu(s)^\beta |\xi|^\alpha \right)\right]^2\ud \xi
				= \frac{C_{\gamma}}{2} s^{2(\beta+\gamma-1)-\frac{d\beta}{\alpha}}=\frac{C_{\gamma}}{2} s^{\rho-1}.
		\end{align*}
		If $\rho>2$, apply Lemma \ref{L:fracdi} to  \eqref{E:h^(n)} for $n=2$ and integrate $h''(t)$ by parts, we have
		\begin{align*}
				h_s''(t)\Big|_{t=s} & = \int_{\R^d}\ud \xi \: \mathfrak{E}\left(s,\beta+\gamma-1,r,\xi\right)\Big|^{r=0}_{r=s}                                                                                     \\
                            & \quad-\frac{1}{(2\pi)^d}\int_{\R^d}\ud \xi\:\left[(s-r)^{\beta+\gamma-2} E_{\beta,\beta+\gamma-1}\left(-2^{-1}\nu(s-r)^\beta |\xi|^\alpha \right)\right]^2\Bigg|^{r=0}_{r=s} \\
                            & = C_{\gamma,\gamma-1} s^{2(\beta+\gamma)-3-d\beta/\alpha}-C_{\gamma-1}\frac{1}{\rho-2}s^{2(\beta+\gamma)-3-d\beta/\alpha}                                                    \\
                            & = \left[C_{\gamma,\gamma-1}-\frac{C_{\gamma-1}}{\rho-2} \right]s^{\rho-2},
		\end{align*}
		where both constants $C_{\gamma-1}$ and $C_{\gamma,\gamma-1}$ are defined in \eqref{E:Csharp}.
\end{proof}

\section {H\"older continuity of \texorpdfstring{$t\mapsto h_s^{(n)}(t)$}{} via local fractional derivative}
\label{S:Holder}

In this section, we will calculate the \textit{local  fractional  derivative} (LFD) of $h_s(t)$ at
$t=s$, and then prove the following Proposition \ref{P:Holder_h} about the H\"older continuity of
$h_s(t)$ at $t=s$:

\begin{proposition}\label{P:Holder_h}
		For all $t\in[s, \infty)$, for $n$ and $q$ defined in \eqref{E:nq}, it holds that
		\begin{align*}
				h_s(t)-\sum_{k=0}^{n}\frac{h_s^{(k)}(s)}{k!}(t-s)^k= o\left[(t-s)^{q}\right].
		\end{align*}
\end{proposition}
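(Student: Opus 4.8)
The plan is to strip off the regular part of $h_s$ with an ordinary integer Taylor polynomial and reduce the whole proposition to one estimate on $h_s^{(n+1)}$. The reasoning behind Lemma \ref{L:intdevh(s)} (with the $(t-r)$-power simply bounded on $[s,T]$ when its exponent is nonnegative) gives $h_s^{(k)}\in C([s,T])$ for every $0\le k\le n$, so $h_s\in C^{n}([s,T])$ and the coefficients $h_s^{(k)}(s)$ are finite. Writing $R_n(t)$ for the left-hand side of Proposition \ref{P:Holder_h}, the integral form of Taylor's remainder (which needs only $h_s\in C^n$) gives, for $n\ge 1$,
\begin{align*}
R_n(t)=\frac{1}{(n-1)!}\int_s^t (t-\tau)^{n-1}\left[h_s^{(n)}(\tau)-h_s^{(n)}(s)\right]\ud\tau,
\end{align*}
while $R_0(t)=h_s(t)-h_s(s)$. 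Since $h_s^{(n)}$ is continuous and, as shown below, absolutely continuous on $[s,T]$, it suffices to bound $h_s^{(n)}(\tau)-h_s^{(n)}(s)=\int_s^\tau h_s^{(n+1)}$ as $\tau\downarrow s$.

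The crux is the pointwise bound
\begin{align*}
\left|h_s^{(n+1)}(\tau)\right|\le C\,(\tau-s)^{\rho-n-1},\qquad \tau\in(s,T],
\end{align*}
with an extra factor $\log\tfrac{1}{\tau-s}$ in the borderline case $\rho=n+1$. To obtain it I would differentiate the order-$n$ identity \eqref{E:h^(n)} once more, using Lemma \ref{L:fracdi} with $q=-1$, to get (for fixed $\tau>s$, the interchange being justified as in Lemma \ref{L:intdevh(s)} but now only on compact subsets of $(s,T]$)
\begin{align*}
h_s^{(n+1)}(\tau)=\int_0^s\ud r\int_{\R^d}\ud\xi\:\mathfrak{E}(\tau,\beta+\gamma-n-1,r,\xi).
\end{align*}
Bounding the $\ud\xi$-integral by Lemma \ref{L:Y^2} (with $\delta=\gamma-n-1$, $\delta'=\gamma$ and arguments $\tau-r$, $s-r$) collapses this to
\begin{align*}
\left|h_s^{(n+1)}(\tau)\right|\le C\int_0^s (\tau-r)^{a-n-1}(s-r)^{a}\ud r,\qquad a:=\beta+\gamma-1-\tfrac{\beta d}{2\alpha}=\tfrac{\rho-1}{2},
\end{align*}
and the substitution $w=s-r$, split at $w=\tau-s$, produces $(\tau-s)^{2a-n}=(\tau-s)^{\rho-n-1}$ (the split is where the logarithm enters when $\rho=n+1$). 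Feeding this back, $h_s^{(n)}(\tau)-h_s^{(n)}(s)=O\!\left((\tau-s)^{\rho-n}\right)$ and hence $R_n(t)=O\!\left((t-s)^{\rho}\right)$, which is $o\!\left[(t-s)^q\right]$ precisely because $q<\rho$.

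In the language of this section the computation says that the order-$q$ local fractional derivative of $h_s$ at $t=s$ vanishes: through its Caputo representation the LFD equals $\lim_{t\downarrow s}\frac{1}{\Gamma(n+1-q)}\int_s^t(t-\tau)^{n-q}h_s^{(n+1)}(\tau)\ud\tau$, and the same bound makes the integral $O[(t-s)^{\rho-q}]\to 0$; the fractional Taylor theorem then repackages the vanishing of the LFD as the expansion asserted in Proposition \ref{P:Holder_h}. Either way, the substance is the displayed estimate on $h_s^{(n+1)}$.

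The main obstacle is exactly that estimate. Unlike the derivatives of order $\le n$ treated in Lemma \ref{L:intdevh(s)}, the dominating bound for $\dv{^{n+1}}{\tau^{n+1}}\mathfrak{E}$ is no longer integrable uniformly up to $\tau=s$ — indeed $h_s^{(n+1)}(\tau)$ genuinely blows up as $\tau\downarrow s$ once $\rho\le n+1$ — so the differentiation under the integral can only be performed for $\tau$ bounded away from $s$, and the blow-up rate must be tracked exactly through the near-diagonal region $r\to s$, $\tau\to s$. The order of integrability there is dictated by the sign of $\rho-(n+1)$, and it is the strict inequality $q<\rho$ that secures the $o[(t-s)^q]$ conclusion, including in the borderline case $\rho=n+1$ where only the logarithmically corrected bound is available.
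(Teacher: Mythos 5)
Your argument is correct, but it takes a genuinely different route from the paper. The paper deliberately avoids ever writing $h_s^{(n+1)}$: it invokes the fractional Taylor theorem (Lemma \ref{L:ftt}) and reduces the proposition to showing $\LFD^{q}h_s(s)=0$, which is done in Lemma \ref{L:caputo_h} by representing the Caputo derivative $\left(\partial_{s+}^{q-n}h_s^{(n)}\right)(t)$ as a triple integral and justifying Fubini through the bound on $J$. You instead establish the pointwise bound $\left|h_s^{(n+1)}(\tau)\right|\le C(\tau-s)^{\rho-n-1}$ on $(s,T]$ (with the logarithmic correction at $\rho=n+1$), deduce that $h_s^{(n)}$ is absolutely continuous on $[s,T]$ --- which, harmlessly, contradicts the paper's cautionary remark in Section \ref{S:LFD-intro} that $h_s^{(n)}$ is ``in general not absolutely continuous'' --- and conclude with the classical integral form of the Taylor remainder, obtaining the stronger rate $O\left[(t-s)^{\rho}\right]$ (up to a logarithm) rather than merely $o\left[(t-s)^{q}\right]$. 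The computational core is the same: your integral $\int_0^s(\tau-r)^{\frac{\rho-1}{2}-n-1}(s-r)^{\frac{\rho-1}{2}}\ud r$ is precisely the inner $\ud r$-part of the paper's bound \eqref{E:Jbdd}, coming from the same application of Lemma \ref{L:Y^2}; you integrate in the other order and dispense with the kernel $(t-y)^{n-q}$. Two points should be made explicit in a full write-up: (i) the domination justifying differentiation under the integral at order $n+1$ for $\tau$ bounded away from $s$ must retain \emph{both} Mittag-Leffler factors, since $\left(1+c(s-r)^\beta|\xi|^\alpha\right)^{-1}$ alone is not $\ud\xi$-integrable when $\alpha\le d<2\alpha$ (Cauchy--Schwarz as in Lemma \ref{L:Y^2} handles this); and (ii) absolute continuity of $h_s^{(n)}$ on the closed interval follows from its continuity at $s$ (Lemma \ref{L:intdevh(s)}) together with $h_s^{(n+1)}\in L^1([s,T])$, by integrating over $[s+\epsilon,\tau]$ and letting $\epsilon\downarrow 0$; the inequality $\rho-n-1>-1$ from \eqref{E:nq} is exactly what makes this work. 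In exchange for redoing the interchange arguments on compact subsets of $(s,T]$, your route is more elementary and yields a sharper remainder, while the paper's LFD route never needs the pointwise existence of $h_s^{(n+1)}$, at the price of the machinery of Lemmas \ref{L:lfdformvar}--\ref{L:caputo_h}.
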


\begin{proof}
		The proof of this proposition is an application of the Fractional Taylor theorem (see Lemma
\ref{L:ftt}) to $h_s(t)$ with all conditions verified in Lemma \ref{L:caputo_h} below.
\end{proof}

In the following, we first give a brief overview of about LFD in Section \ref{S:LFD-intro} below. We
refer the interested reader to \cite{YBS16, KoLFDrev13, KoGaLFD} for more details about LFD.

\subsection{Local fractional derivative}
\label{S:LFD-intro}

Recall that for $-\infty<s<T<\infty$ and $q\ge 0$, the (left-sided) \textit{Riemann-Liouville fractional  integral} of
order $q$ of $f\in L^{1}([s,T];\R)$ is defined by (see, e.g., \cite[(2.17), p.33]{SamkoKilbasMarichev93})
\begin{align}
    \label{E:RL-Int-Def}
		\left( I_{s+}^{q} f \right)(t):= \frac{1}{\Gamma(q)}\int_s^t f(y)(t-y)^{q-1}\ud y, \quad t\in (s,T),
\end{align}
and the corresponding (left-sided) fractional derivative of order $q$ is defined formally by (see, e.g.,
\cite[(2.22), p.35]{SamkoKilbasMarichev93})
\begin{align}
    \label{E:RL-Dev-Def}
		\left( D_{s+}^q f \right)(t):=\left(\dv{ }{t}\right)^{n+1} \left( I_{s+}^{n+1-q} f \right)(t)={1\over \Gamma(n+1-q)}\left(\dv{ }{t}\right)^{n+1}\int_s^t f(y)(t-y)^{n-q}\ud y.
\end{align}
When $q=n$, the above definition gives $f^{(n)}(t)$ (if $f^{(n)}(t)$ exists) on $ (s, T)$.

\begin{definition}
		For arbitrary $q>0$, the (left-sided) \textit{local fractional derivative}
		(LFD) of $f(t)$ of order $q$ at $t=s$ is defined as, provided the limit exits,
		\begin{align}
				\label{E:LFD-Def}
				\LFD^qf(s) := \lim_{t\rightarrow s+}  \left( D_{s+}^{q} \left[f(\cdot)-\sum_{k=0}^{n}\frac{f^{(k)}(s)}{k!}(\cdot-s)^k\right] \right)(t),\quad 0<n<q\le n+1.
		\end{align}
\end{definition}
Clearly, when $q \in \mathbb{Z}_+$, $\LFD^qf(s) = f^{(q)}(s)$. Kolwankar and Gangal
\cite{KoGaLFD,KoGaLFDfp} introduced the above LFD of order $q\in(0,1)$ to investigate the regularity
of non-differentiable functions.  
Recall that the \textit{(left-sided) Caputo fractional derivative} is defined as (see, e.g., \cite[(2.4.1), p.91]{KilbasEtc06})
\begin{align}
		\label{E:CapDef}
		\left( \partial_{s+}^q f \right)(t) := \left( D_{s+}^{q} \left[f(\cdot)-\sum_{k=0}^{n}\frac{f^{(k)}(s)}{k!}(\cdot-s)^k\right] \right)(t),\qquad n = \Ceil{q}-1,
\end{align}
whereas the notation $\partial^\beta$ in \eqref{E:caputuder} is a short-hand notation for $\partial_{0+}^\beta$.

Comparing \eqref{E:LFD-Def} and \eqref{E:CapDef}, one
sees that the LFD is nothing but the limit of Caputo fractional derivative, namely, $\LFD^qf(s)
=\lim_{t\rightarrow s} \left( \partial_{s+}^q f \right)(t)$. When $f^{(n)}(t)$ is absolutely continuous on $[s,T]$,
both the Riemann-Liouville derivative and the Caputo fractional derivative are well defined
\textit{a.e.} on $[s,T]$ with the Caputo derivative having a shorter form after $\Ceil{q}$ times
of integration-by-parts (see \cite[Theorem 2.2, p.39]{SamkoKilbasMarichev93} or \cite[Theorem 2.1,
p.92]{KilbasEtc06}):
\begin{align}
		\label{E:caputova}
		\left( \partial_{s+}^q f \right)(t)=\frac{1}{\Gamma(n+1-q)} \int_s^t\:
		\frac{f^{(n+1)}(\tau)}{(t-\tau)^{q-n}}\ud\tau, \qquad n=\Ceil{q}-1.
\end{align}
In particular, when $s=0$, this reduces to the form \eqref{E:caputuder}.

However, in this paper, for $0\le n<q<n+1$, the function $h_s^{(n)}(t)$ is in general not absolutely
continuous over $[s,T]$, or equivalently, $h_s^{(n+1)}(t)$ may not be well defined. When computing
$\left(\partial_{s+}^q h_s \right)(t)$, one cannot blindly apply the expression \eqref{E:caputova}.
We don't presume that $\left(\partial_{s+}^q  h_s \right)(t)$ is well defined until it's shown in
Lemma \ref{L:caputo_h}. We will need the following composition property of Caputo derivative without
assuming $f^{(n)}(t)$ to be absolutely continuous, which shows that the LFD of $f(t)$ of order
$q\in\R_+\setminus\bbZ_+$ is also the $(q-n)$-th order LFD of $f^{(n)}(t)$, where $n=\Floor{q}$.

\begin{lemma}
		\label{L:lfdformvar}
		Suppose $0< n <q < n+1$ and $T>0$.  Let $f^{(n-1)}(t)$ be absolutely
		continuous on $[s,T]$.  If either $\left( \partial_{s+}^q f \right)(t)$  or
		$\left(\partial_{s+}^{q-n} f^{(n)} \right)(t)$ exists on $(s,T)$, then
		$\left(\partial_{s+}^{q} f\right)(t)=\left( \partial_{s+}^{q-n} f^{(n)} \right)(t)$ on $(s,T).$
\end{lemma}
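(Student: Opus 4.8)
The plan is to reduce both Caputo derivatives to one and the same Riemann–Liouville expression; once that is done, the equivalence together with the ``if either exists'' clause is immediate. Write $\mu:=q-n\in(0,1)$ and let $P(t):=\sum_{k=0}^{n}\frac{f^{(k)}(s)}{k!}(t-s)^{k}$ be the Taylor polynomial of degree $n$, so that by \eqref{E:CapDef} one has $\left(\partial_{s+}^{q}f\right)(t)=\left(D_{s+}^{q}R\right)(t)$ with $R:=f-P$. Note that $P$, and hence either Caputo derivative, is meaningful only once $f^{(n)}(s)$ exists (it enters the top term of $P$), which I take to be part of the hypothesis that the derivatives in question are defined. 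Since $f^{(n-1)}$ is absolutely continuous on $[s,T]$, the functions $f,f',\dots,f^{(n-1)}$ are continuous there, so $R^{(k)}(s)=0$ for $0\le k\le n-1$, the function $R^{(n-1)}$ is absolutely continuous, and its almost-everywhere derivative is $R^{(n)}=f^{(n)}-f^{(n)}(s)\in L^{1}([s,T])$, because $P^{(n)}\equiv f^{(n)}(s)$.

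First I would establish the Taylor integral remainder $R=I_{s+}^{n}R^{(n)}$. Indeed, $R^{(n-1)}(s)=0$ with $R^{(n-1)}$ absolutely continuous gives $R^{(n-1)}=I_{s+}^{1}R^{(n)}$; iterating this $n$ times and using $R^{(k)}(s)=0$ for $k<n$ yields $R=I_{s+}^{n}R^{(n)}$. Since $n+1-q=1-\mu>0$ and $R^{(n)}\in L^{1}$, the semigroup property $I_{s+}^{a}I_{s+}^{b}=I_{s+}^{a+b}$ for $a,b>0$ then gives $I_{s+}^{n+1-q}R=I_{s+}^{1-\mu}I_{s+}^{n}R^{(n)}=I_{s+}^{n+1-\mu}R^{(n)}$.

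Next I would invoke the single-step rule $\frac{\ud}{\ud t}I_{s+}^{a}h=I_{s+}^{a-1}h$, valid for $h\in L^{1}([s,T])$ and $a>1$; it follows from the semigroup property by writing $I_{s+}^{a}h=I_{s+}^{1}\left(I_{s+}^{a-1}h\right)=\int_{s}^{\cdot}\left(I_{s+}^{a-1}h\right)(y)\,\ud y$, whose integrand lies in $L^{1}$, so the primitive is absolutely continuous with derivative $I_{s+}^{a-1}h$. Applying this $n$ times to $I_{s+}^{n+1-\mu}R^{(n)}$, with successive exponents $n+1-\mu,\dots,2-\mu$, each exceeding $1$ since $\mu<1$, produces $\left(\frac{\ud}{\ud t}\right)^{n}I_{s+}^{n+1-\mu}R^{(n)}=I_{s+}^{1-\mu}R^{(n)}$. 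Combining with \eqref{E:RL-Dev-Def} then yields
\[
\left(\partial_{s+}^{q}f\right)(t)=\left(D_{s+}^{q}R\right)(t)=\left(\frac{\ud}{\ud t}\right)^{n+1}I_{s+}^{n+1-q}R(t)=\frac{\ud}{\ud t}\,I_{s+}^{1-\mu}R^{(n)}(t).
\]

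On the other hand, since $q-n=\mu\in(0,1)$ has $\Ceil{\mu}-1=0$, the definition \eqref{E:CapDef} of the Caputo derivative of $f^{(n)}$ subtracts only the constant $f^{(n)}(s)$, so $\left(\partial_{s+}^{q-n}f^{(n)}\right)(t)=\left(D_{s+}^{\mu}\left[f^{(n)}-f^{(n)}(s)\right]\right)(t)=\left(D_{s+}^{\mu}R^{(n)}\right)(t)=\frac{\ud}{\ud t}\,I_{s+}^{1-\mu}R^{(n)}(t)$. Both Caputo derivatives are therefore literally the same object $\frac{\ud}{\ud t}I_{s+}^{1-\mu}R^{(n)}$, so if either exists at a point of $(s,T)$ then so does the other and they agree, which is the claim. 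I expect the main technical care to lie in the iterated differentiation step: each single differentiation is clean as long as the exponent exceeds $1$, but the final fractional integral $I_{s+}^{1-\mu}R^{(n)}$ has order below $1$ and need not be continuous, so the penultimate identity holds only almost everywhere and the last derivative $\frac{\ud}{\ud t}I_{s+}^{1-\mu}R^{(n)}$ must be read pointwise wherever it exists. This is precisely why the conclusion is stated conditionally on existence rather than asserting existence outright; a secondary point to verify is that $f^{(n)}(s)$ indeed exists, which it does in the intended application since $h_s^{(n)}$ is continuous on $[s,T]$ by Lemma \ref{L:intdevh(s)}.
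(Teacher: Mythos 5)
Your proof is correct and lands on exactly the same key identity as the paper, namely $\left(\partial_{s+}^{q}f\right)(t)=\frac{1}{\Gamma(n+1-q)}\frac{\ud}{\ud t}\int_s^t (t-y)^{n-q}\left[f^{(n)}(y)-f^{(n)}(s)\right]\ud y$, which is then read off as $\left(\partial_{s+}^{q-n}f^{(n)}\right)(t)$; the only difference is mechanical — the paper transfers the $n$ derivatives onto $f$ by $n$ successive integrations by parts inside the Riemann--Liouville kernel, while you write $R=I_{s+}^{n}R^{(n)}$ and invoke the semigroup property before peeling off ordinary derivatives. Your explicit remarks on where the identity is only almost-everywhere and on the need for $f^{(n)}(s)$ to exist are, if anything, more careful than the paper's treatment.
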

\begin{proof}
		Fix an arbitrary $t\in (s,T)$. Starting from the definition of the Caputo derivative
		\eqref{E:CapDef}, via integration-by-parts, we see that
		\begin{align*}
				\left( \partial_{s+}^{q} f \right)(t)
				= & \frac{1}{\Gamma(n+1-q)} \left(\dv{}{ t}\right)^{n+1}\left\{-\left.\frac{(t-y)^{n+1-q}}{n+1-q}\left[f(y)-\sum_{k=0}^{n} \frac{f^{(k)}(s)}{k !}(y-s)^{k}\right]\right|_{y=s} ^{t}\right. \\
				  & \quad \left.+\int_{s}^{t} \frac{(t-y)^{n+1-q}}{n+1-q} \dv{}{ y}\left[f(y)-\sum_{k=0}^{n} \frac{f^{(k)}(s)}{k !}(y-s)^{k}\right] \ud y\right\}                                         \\
				= & \frac{1}{\Gamma(n+1-q)} \left(\dv{}{ t}\right)^{n+1}\int_{s}^{t} \frac{(t-y)^{n+1-q}}{n+1-q} \left[f'(y)-\sum_{k=1}^{n} \frac{f^{(k)}(s)}{(k-1) !}(y-s)^{k-1}\right] \ud y             \\
				= & \frac{1}{\Gamma(n+1-q)} \left(\dv{}{ t}\right)^{n}\int_{s}^{t} (t-y)^{n-q}\left[f'(y)-\sum_{k=1}^{n} \frac{f^{(k)}(s)}{(k-1) !}(y-s)^{k-1}\right] \ud y.
		\end{align*}
		Repeat the above integration-by-parts $n$ times to see that
		\begin{align}
				\label{E:lfdformvar}
				\left( \partial_{s+}^{q} f \right)(t) =
				\frac{1}{\Gamma(n+1-q)}\dv{}{ t}\int_{s}^{t} (t-y)^{n-q}\left[f^{(n)}(y)-f^{(n)}(s)\right] \ud y,
		\end{align}
		which is nothing but what we need to prove, namely, $\left(\partial_{s+}^{q} f\right)(t)=\left(
		\partial_{s+}^{q-n} f^{(n)} \right)(t)$.
\end{proof}
\begin{remark}
		If $f^{(n)}$ is absolutely continuous on
		$[s,T]$ as is assumed in Theorem 2.1 of \cite{KilbasEtc06}, one can apply one more time this
		integration-by-parts to arrive at \eqref{E:caputova}.  Here, we will postpone the determination of
		the well-posedness of $\left( \partial_{s+}^{q} f^{(n)}\right) (t)$ latter.
\end{remark}
The following \textit{Fractional Taylor theorem} is crucial for the estimation of $h_s(t)$.
\begin{lemma}[Fractional Taylor theorem] \label{L:ftt}
		Let $f$ be a real-valued function. Assume that for some $q>0$ and $s\in\R$, the following two
		conditions hold:
		\begin{enumerate}
				\item $f^{(\Ceil{q}-1)}(t)$ be right-continuous at $s$, namely, $\lim_{t\to s_+}
						f^{(\Ceil{q}-1)}(t) = f^{(\Ceil{q}-1)}(s)$;
				\item $\LFD^qf(s)$ exists;
		\end{enumerate}
		then
		\begin{align}
				\label{E:ftt}
				f(t)-\sum_{k=0}^{\Ceil{q}-1}\frac{f^{(k)}(s)}{k!}(t-s)^k= \frac{\LFD^qf(s)}{\Gamma(q+1)} (t-s)^q+o\left[(t-s)^{q}\right],
				\quad \text{as $t\downarrow s$.}
		\end{align}
\end{lemma}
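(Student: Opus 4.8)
The plan is to show that, after subtracting the appropriate Taylor polynomial, the remainder is exactly recovered by fractionally integrating its own fractional derivative, whose limit at $s$ is precisely $\LFD^q f(s)$. Write $n=\Ceil{q}-1$ and set
\[
	g(t):=f(t)-\sum_{k=0}^{n}\frac{f^{(k)}(s)}{k!}(t-s)^{k},
\]
so that $g^{(k)}(s)=0$ for $0\le k\le n$ and, by hypothesis~(1), $g^{(n)}$ is right-continuous at $s$ with $g^{(n)}(s+)=0$. By the definition \eqref{E:CapDef} of the Caputo derivative one has $(D_{s+}^{q}g)(t)=(\partial_{s+}^{q}f)(t)$, so hypothesis~(2) says exactly that $L:=\lim_{t\to s+}(D_{s+}^{q}g)(t)=\LFD^q f(s)$ exists; in particular $D_{s+}^{q}g$ is bounded, hence locally integrable, near $s$. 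The goal is then to prove $g(t)=\frac{L}{\Gamma(q+1)}(t-s)^{q}+o[(t-s)^{q}]$ as $t\downarrow s$.

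The heart of the argument is the inversion identity $g=I_{s+}^{q}\big(D_{s+}^{q}g\big)$ on some interval $(s,s+\delta)$. Since $f^{(n)}$ is not assumed absolutely continuous, $g^{(n+1)}$ may fail to exist and the classical fundamental theorem is unavailable; instead I would invoke the composition rule for $I_{s+}^{q}$ and $D_{s+}^{q}=(\ud/\ud t)^{n+1}I_{s+}^{n+1-q}$, which returns $g$ minus a finite sum of boundary terms $c_{j}(t-s)^{q-j}$, $1\le j\le n+1$, whose coefficients are the values at $s+$ of $(\ud/\ud t)^{n+1-j}I_{s+}^{n+1-q}g$. Here I would use that, since $f^{(n-1)}$ is (locally) absolutely continuous and $g^{(k)}(s)=0$ for $k\le n$ (as holds in our application, where $h_s^{(n)}$ is even continuous), repeated integration gives $g=I_{s+}^{n}g^{(n)}$. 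Consequently each boundary coefficient is a fractional integral $I_{s+}^{\theta}g^{(n)}$ of order $\theta\ge n+1-q\ge 0$ evaluated at $s+$, and each such quantity vanishes because $g^{(n)}$ is bounded near $s$ with $g^{(n)}(s+)=0$. Thus all boundary terms drop out and the inversion identity holds.

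With the inversion in hand, write $(D_{s+}^{q}g)(y)=L+\varepsilon(y)$ with $\varepsilon(y)\to0$ as $y\downarrow s$, so that
\begin{align*}
	g(t)&=\frac{1}{\Gamma(q)}\int_s^t (t-y)^{q-1}\big(L+\varepsilon(y)\big)\,\ud y\\
	&=\frac{L}{\Gamma(q+1)}(t-s)^{q}+\frac{1}{\Gamma(q)}\int_s^t (t-y)^{q-1}\varepsilon(y)\,\ud y.
\end{align*}
The last integral is $o[(t-s)^{q}]$: given $\eta>0$, shrinking $\delta$ so that $|\varepsilon(y)|\le\eta$ on $(s,s+\delta)$ bounds it by $\frac{\eta}{\Gamma(q+1)}(t-s)^{q}$. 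This yields \eqref{E:ftt}.

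A cleaner route to the same conclusion reduces everything to the base case $0<q\le1$ via Lemma~\ref{L:lfdformvar}: setting $\phi:=f^{(n)}$, that lemma gives $(\partial_{s+}^{q}f)(t)=(\partial_{s+}^{q-n}\phi)(t)$ and hence $\LFD^q f(s)=\LFD^{q-n}\phi(s)$, so the base case applied to $\phi$ with order $q-n\in(0,1]$ yields the expansion of $f^{(n)}$, which one integrates $n$ times (little-$o$ being preserved since $\int_s^t(t-y)^{q-1}o[(y-s)^{\cdot}]\,\ud y$ stays little-$o$) to recover \eqref{E:ftt}. Either way, the main obstacle is the inversion step: confirming that $I_{s+}^{q}$ genuinely undoes $D_{s+}^{q}$ on $g$ with \emph{no} surviving boundary terms. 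This is precisely where hypotheses~(1) and~(2) are used in an essential way, and where the lack of absolute continuity of $f^{(n)}$ forces one to argue directly with the Riemann--Liouville operators rather than appealing to the standard fundamental theorem.
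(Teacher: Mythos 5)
Your proof is correct and follows essentially the same route as the paper: both rest on the inversion identity $f(t)-\sum_{k=0}^{\Ceil{q}-1}\frac{f^{(k)}(s)}{k!}(t-s)^k=\bigl(I_{s+}^{q}\,\partial_{s+}^{q}f\bigr)(t)$ followed by writing $(\partial_{s+}^{q}f)(y)=\LFD^{q}f(s)+o(1)$ and applying $I_{s+}^{q}$, and your ``cleaner route'' (reduction to $0<q\le 1$ via Lemma \ref{L:lfdformvar} and then integrating $\Floor{q}$ times) is exactly how the paper treats $q>1$. The only cosmetic difference is in how the inversion is justified for the base case: the paper writes $I_{s+}^{1}=I_{s+}^{q}\circ I_{s+}^{1-q}$ and integrates by parts, using that $I_{s+}^{1-q}[f(\cdot)-f(s)]$ is absolutely continuous near $s$, whereas you invoke the general composition formula for $I_{s+}^{q}D_{s+}^{q}$ and check that its boundary terms vanish.
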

\begin{remark}
		This lemma is a rephrasing of \cite[Theorems 4 and 5]{BenCre13}, which has been both tailored
		for our application and generalized from $q\in (0,1)$ to all $q>0$. To the best of our
		knowledge, Lemma \ref{L:ftt} also appeared, with stronger or equivalent conditions, in the
		following references: In the case of $n=0$, \cite{KoGaLFD} proved it under a stronger condition
		that $\left( \partial_{s+}^{q} f \right)(t)$ is absolutely continuous; Proposition 2 of
		\cite{CYK10} in fact needs that $( I_{s+}^{1-q} f)(t)$ is absolutely continuous. Note that both
		Proposition 3.1 of \cite{Jum06} and Theorem 1.12 of \cite{YBS16} give different types of
		Fractional Taylor theorem under more stringent conditions on $f(t)$. For completeness and also
		for the readers' convenience, we reproduce below the proof of this lemma with slight more
		details.
\end{remark}

\begin{proof}[Proof of Lemma \ref{L:ftt}]
		When $q\ge 1$ is an integer, \eqref{E:ftt} reduces to the classical Taylor's expansion. In the
		following, we will assume that $q\not\in\mathbb{Z}$.

		Let us first consider the case $0<q<1$. Since $f(t)$ is right continuous at $t=s$, there exists
		some $T>s$ such $f\in C([s,T])\subseteq L^1([s,T])$. Therefore, for all $t\in [s,T]$,
		\begin{align}
				\notag
				f(t)-f(s) = & \dv{}{t} \left( I_{s+}^{1} [ f(\cdot)-f(s) ] \right)(t)
                  = \dv{}{t}\left( I_{s+}^{q}\circ I_{s+}^{1-q} [ f(\cdot)-f(s) ] \right)(t) \\
                  = & \dv{}{t}\frac{1}{\Gamma(q)}\int_s^t \left(I_{s+}^{1-q} [ f(\cdot)-f(s) ]\right )(y)\: (t-y)^{q-1} \ud y,
			 \label{E_:f-f}
		\end{align}
		where the first and the second equalities are due to the fundamental theorem of calculus and
		Lemma 2.3 on p. 73 of \cite{KilbasEtc06}, respectively, both applied to continuous functions.
		Now in order to apply the integration-by-parts formula to the above $\ud y$-integral, we need to
		show that
		\begin{align}
				\label{E:AB}
				\text{the function $y\mapsto \left( I_{s+}^{1-q} \left[ f(\cdot)-f(s)\right ] \right)(y)$ is absolutely continuous on $[s,T]$.}
		\end{align}
		Indeed, by \eqref{E:CapDef},
		\begin{align*}
				\frac{\ud }{\ud y}\left(I_{s+}^{1-q} \left[ f(\cdot)-f(s) \right]\right)(y)
				= (\partial_{s+}^q f)(y),
		\end{align*}
		where the right-hand side is well defined, at least at a small right neighborhood of $s$, thanks
		to the condition that $\LFD^q f(s)$ exists. Hence, by moving $T$ towards $s$ whenever necessary,
		we see that $y\mapsto \left( I_{s+}^{1-q} \left[ f(\cdot)-f(s)\right ] \right)(y)$ is  is
		differential {\it everywhere} on $[s,T]$ with its derivative equal to $\left(D_{s+}^{q} \left[
		f(\cdot)-f(s) \right]\right)(y)$ or, equivalently, $\left(\partial_{s+}^{q} f\right)(y)$ (see
		\eqref{E:CapDef}). Again, the existence of $\LFD^q f(s)$ guarantees that
		\begin{align}
				\label{E:Df-L1}
				(\partial_{s+}^qf)(\cdot) \in L^{\infty}([s,T])\cap L^1([s,T]).
		\end{align}
		This proves \eqref{E:AB}.  Hence, we  can apply the integration-by-parts to see that for all
		$y\in [s,T]$, the right-hand side of \eqref{E_:f-f} is equal to
		\begin{align*}
				= & \dv{}{t}\frac{1}{\Gamma(q)}\bigg[\: -\frac1q (t-y)^q \times \left(I_{s+}^{1-q} [ f(\cdot)-f(s) ]\right)(y) \Big|_{y=s}^{y=t}
					 + \int_s^t  \left( \partial_{s+}^{q} f \right)(y) \times \frac1q (t-y)^{q} \ud y \:\bigg]   \\
				= & \dv{}{t}\frac{1}{\Gamma(q)}\int_s^t \left( \partial_{s+}^{q} f \right)(y)\times \frac1q (t-y)^{q} \ud y,
		\end{align*}
		where the first term in the bracket is equal to zero because $f\in C([s,T])\subseteq
		L^1([s,T])$. Thanks to the boundedness of the integrand (see \eqref{E:Df-L1}), one can switch the $\ud t$ derivative and the $\ud y$
		integral to see that
		\begin{align}
				\label{E_:ftt1}
				f(t)-f(s)= \left( I_{s+}^{q}\circ D_{s+}^{q} \left[ f(\cdot)-f(s) \right] \right)(t),\quad \text{for all $t\in[s,T]$},
		\end{align}
		where the fractional integral is well defined thanks to \eqref{E:Df-L1}. Finally, the definition
		of LFD in \eqref{E:LFD-Def} implies that
		\begin{align}
				\label{E_:ftt2}
				\left( I_{s+}^{q} \circ D_{s+}^{q} [ f(\cdot)-f(s) ] \right)(t)
				= \left( I_{s+}^{q} [ \LFD^qf(\cdot)+o(1) ] \right)(t)
				= \frac{\LFD^q f(s)}{\Gamma(q+1)} (t-s)^q+o\left[(t-s)^{q}\right].
		\end{align}
		Combining \eqref{E_:ftt1} and \eqref{E_:ftt2} proves \eqref{E:ftt} for the case when $q\in (0,1)$.
		\bigskip

		It remains to prove the case when $q>1$ and $q\not\in\mathbb{Z}$. Notice that by Lemma
		\ref{L:lfdformvar}, the existence of $\LFD^q f(s)$ is equivalent to the existence of
		$\LFD^{q-\Floor{q}}f^{(\Floor{q})}(s)$. Now we can apply the case of $0<q<1$ to
		$f^{(\Floor{q})}(t)$ to see that
		\begin{align}\label{lfftt}
				f^{(\Floor{q})}(t)-f^{(\Floor{q})}(s)
				=&\frac{\LFD^{q-\Floor{q}}f^{(\Floor{q})}(s)}{\Gamma(q-\Floor{q}+1)}(t-s)^{q-\Floor{q}}+o\left((t-s)^{q-\Floor{q}}\right).
		\end{align}
		Finally, the case is proved by applying the $I_{s+}^{\Floor{q}}$ on both sides of \eqref{lfftt}.
		This completes the proof of Lemma \ref{L:ftt}.
\end{proof}

\subsection{Local fractional derivative of \texorpdfstring{$h_s^{(n)}(t)$}{} }
\label{S:lfdhn}

By Lemma \ref{L:ftt} and Lemma \ref{L:lfdformvar}, to prove Proposition \ref{P:Holder_h}  we only need
to show  $\LFD^{q-n}h_s^{(n)}(t)=0$ for $0\leq n<q<n-1$. We will apply Fubini's Theorem to show
the interchangeability of local fractional operator $\LFD^q $ and the integral operator
$\int^{s}_{0}\int_{\R^d}$ in $h_s^{(n)}(t)$ in the following two lemmas.

\begin{lemma}
		\label{L:FracInt}
		Assuming \eqref{E:nq}, for all $T\in (s,\infty)$, it holds that
		\begin{align}
				\label{E:FracInt_con}
				t\mapsto \left( I_{s+}^{n+1-q} \left[h_s^{(n)}(\cdot)-h_s^{(n)}(s)\right] \right)(t) \in C([s,T])
		\end{align}
		and for all $t\in[s,T]$,
		\begin{multline}\label{E:FraInt_h}
			\left( I_{s+}^{n+1-q}  [h_s^{(n)}(\cdot)- h_s^{(n)}(s)] \right)(t) \\
			= \int^{s}_{0}  \ud r\int_{\R^d} \ud \xi\left( I_{s+}^{n+1-q} \left[\mathfrak{E}(\cdot,\beta+\gamma-n,r,\xi) - \mathfrak{E}(s,\beta+\gamma-n,r,\xi)\right] \right)(t).
		\end{multline}
\end{lemma}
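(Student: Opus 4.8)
The goal of Lemma~\ref{L:FracInt} is to push the fractional integral operator $I_{s+}^{n+1-q}$ inside the double integral $\int_0^s\ud r\int_{\R^d}\ud\xi$ that defines $h_s^{(n)}(t)$. My plan is to first rewrite the left-hand side by substituting the integral representation~\eqref{E:h^(n)} for $h_s^{(n)}(\cdot)$ and, separately, for the constant $h_s^{(n)}(s)$. Writing out the definition of $I_{s+}^{n+1-q}$ from~\eqref{E:RL-Int-Def}, the left-hand side becomes a triple integral in the variables $y\in[s,t]$, $r\in[0,s]$, and $\xi\in\R^d$, with integrand
$$
\frac{(t-y)^{n-q}}{\Gamma(n+1-q)}\left[\dv{^n}{y^n}\mathfrak{E}(y,\beta+\gamma,r,\xi) - \dv{^n}{t^n}\mathfrak{E}(t,\beta+\gamma,r,\xi)\Big|_{t=s}\right].
$$
By Lemma~\ref{L:fracdi} with $q=-n$, the $n$-th derivative of $\mathfrak{E}(\cdot,\beta+\gamma,r,\xi)$ equals $\mathfrak{E}(\cdot,\beta+\gamma-n,r,\xi)$, so the bracket is exactly the integrand appearing on the right-hand side of~\eqref{E:FraInt_h}. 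The whole statement then reduces to justifying the interchange of the $\ud y$-integral (i.e.\ $I_{s+}^{n+1-q}$) with the $\ud r\,\ud\xi$-integral, which is a Fubini/Tonelli argument.

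**The integrability estimate.**
The heart of the matter is to produce a dominating function that is integrable in all three variables simultaneously. First I would control the bracket: for fixed $r,\xi$, the map $y\mapsto \mathfrak{E}(y,\beta+\gamma-n,r,\xi)$ is continuously differentiable on $(r,T]$ away from $y=r$, so I can write the difference $\mathfrak{E}(y,\beta+\gamma-n,r,\xi)-\mathfrak{E}(s,\beta+\gamma-n,r,\xi)$ and bound it using the pointwise estimates already established in the proof of Lemma~\ref{L:intdevh(s)}. Indeed, from the Cauchy--Schwarz computation there we have the bound
$$
\int_{\R^d}\ud\xi\,\left|\mathfrak{E}(t,\beta+\gamma-n,r,\xi)\right| \le C_*\,(t-r)^{\beta+\gamma-n-1-\frac{d\beta}{2\alpha}}(s-r)^{\beta+\gamma-1-\frac{d\beta}{2\alpha}},
$$
and~\eqref{E:neg-coef} tells us the exponent of $(t-r)$ is nonpositive. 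After integrating $\ud\xi$, the bracketed difference is dominated (uniformly for $y\in[s,T]$) by $C_*(s-r)^{\rho-n-1}$ via the argument already used to derive~\eqref{E:intbdd_h^(n)}, and this is $\ud r$-integrable on $[0,s]$ because $\rho>n$. Multiplying by the weight $(t-y)^{n-q}$, which is integrable in $y$ on $[s,t]$ since $n-q>-1$, yields a product of integrable factors, so Tonelli applies to the absolute value and Fubini then justifies~\eqref{E:FraInt_h}.

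**Continuity in $t$.**
For the continuity claim~\eqref{E:FracInt_con}, once the interchange is established I would argue that $t\mapsto I_{s+}^{n+1-q}[h_s^{(n)}(\cdot)-h_s^{(n)}(s)](t)$ is continuous on $[s,T]$ directly. Since $h_s^{(n)}(\cdot)$ is continuous on $[s,T]$ by Lemma~\ref{L:intdevh(s)}, the integrand $y\mapsto(t-y)^{n-q}[h_s^{(n)}(y)-h_s^{(n)}(s)]$ is bounded by an integrable singularity, and the standard continuity-of-fractional-integral result (the Riemann--Liouville integral of a continuous function is continuous, cf.\ the weakly singular kernel $(t-y)^{n-q}$ with $n-q\in(-1,0)$) gives the conclusion. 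One must check the value at the endpoint $t=s$ is $0$, which holds because the integral vanishes as $t\downarrow s$.

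**The main obstacle.**
The delicate point is that the bound on $\int_{\R^d}|\mathfrak{E}(y,\beta+\gamma-n,r,\xi)|\ud\xi$ carries a factor $(y-r)^{\beta+\gamma-n-1-\frac{d\beta}{2\alpha}}$ whose exponent, though nonpositive by~\eqref{E:neg-coef}, may be strictly negative, creating a singularity as $y\downarrow r$. I must confirm that after bounding $(y-r)\ge(s-r)$ (valid since $y\ge s\ge r$) this singularity is absorbed into the $\ud r$-integrable factor $(s-r)^{\rho-n-1}$, and that no additional singularity is introduced at $y=s$ or $r=s$. Verifying that the three-variable dominating function $(t-y)^{n-q}(s-r)^{\rho-n-1}$ is genuinely integrable over the simplex $\{0\le r\le s\le y\le t\}$—which it is, as a product of a Beta-type $\ud y$-integral and a convergent $\ud r$-integral—is the crux of making the Fubini step rigorous rather than formal.
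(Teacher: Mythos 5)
Your proposal is correct and follows essentially the same route as the paper: the interchange is justified by Fubini using the uniform-in-$t$ bound \eqref{E:intbdd_h^(n)} (which is exactly your dominating estimate $C_*(s-r)^{\rho-n-1}$ integrated in $\ud r$, then hit with the integrable weight $(t-y)^{n-q}$), and the continuity \eqref{E:FracInt_con} follows from $h_s^{(n)}\in C([s,T])$ as in Lemma \ref{L:intdevh(s)}. Your write-up merely spells out more explicitly the three-variable domination that the paper compresses into a one-line appeal to \eqref{E:intbdd_h^(n)}.
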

\begin{proof}
		The statement of \eqref{E:FracInt_con} is due to the fact that $h_s^{(n)}(t)\in C([s,T])$; see
		Lemma \ref{L:intdevh(s)}.  As for \eqref{E:FraInt_h}, there are two fractional integrals. The
		second one is trivially true since since $h_s^{(n)}(s)$ does not depend on $t$. For the first
		fractional integral, we can switch the order of integrations because, by \eqref{E:intbdd_h^(n)},
		\begin{align*}
				\left( I_{s+}^{n+1-q} \int^{s}_{0}\ud r \int_{\R^d} \ud \xi \left| \mathfrak{E}(\cdot,\beta+\gamma-n,r,\xi)\right|\right) (t)
				< \left( I_{s+}^{n+1-q} \:  C_T \right) (t) < C_T'< \infty.
		\end{align*}
		This proves the lemma.
\end{proof}

\begin{lemma}
		\label{L:caputo_h}
		Assuming \eqref{E:nq}, for all $T\in (s,\infty)$, it holds that $ \left( \partial_{s+}^{q-n} h_s^{(n)} \right)(t)\in C([s,T])$ and
		\begin{align}
				\label{E:caputo_h}
				\left( \partial_{s+}^{q-n} h_s^{(n)} \right)(t)=\int^{s}_{0} \ud r \int_{\R^d} \ud \xi\;\;
				\left(\partial_{s+}^{q-n} \mathfrak{E}(\cdot,\beta+\gamma-n,r,\xi)\right)(t)
				\quad \text{for all $t\in[s,T]$.}
		\end{align}
		Consequently,
		\begin{align}
				\label{E:lfd_h}
				\LFD^q h_s(s) = \LFD^{q-n} h_s^{(n)}(s) = 0,\qquad\text{for all $s>0$.}
		\end{align}
\end{lemma}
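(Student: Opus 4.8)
The plan is to prove the interchange formula \eqref{E:caputo_h} first and then read off \eqref{E:lfd_h} from it. Throughout write $\delta := \beta+\gamma-n$ and $g_{r,\xi}(\cdot):=\mathfrak{E}(\cdot,\delta,r,\xi)$; by \eqref{E:frackE} this factors as a $t$-dependent Mittag-Leffler piece times a factor carrying $(s-r)$ that is constant in $t$. Since $r<s$, the map $t\mapsto g_{r,\xi}(t)$ is smooth on $[s,T]$, so its Caputo derivative of order $q-n\in(0,1)$ is given by the absolutely continuous form \eqref{E:caputova}. Differentiating the Mittag-Leffler factor via Lemma \ref{L:fracdi} with $q=-1$ (Remark \ref{R:fracdi}) gives $g_{r,\xi}'(\tau)=\mathfrak{E}(\tau,\delta-1,r,\xi)$, hence
\[
		\left(\partial_{s+}^{q-n} g_{r,\xi}\right)(t)=\frac{1}{\Gamma(n+1-q)}\int_s^t (t-\tau)^{n-q}\,\mathfrak{E}(\tau,\delta-1,r,\xi)\,\ud\tau .
\]
This is continuous in $t$ on $[s,T]$ and vanishes at $t=s$ (empty interval of integration).

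Next I would combine this pointwise identity with Lemma \ref{L:FracInt}. By \eqref{E:CapDef}, $\left(\partial_{s+}^{q-n}h_s^{(n)}\right)(t)=\frac{\ud}{\ud t}\left(I_{s+}^{n+1-q}[h_s^{(n)}(\cdot)-h_s^{(n)}(s)]\right)(t)$, and Lemma \ref{L:FracInt} already expresses the inner fractional integral as a double integral over $(r,\xi)$. It therefore remains only to differentiate under the double integral sign. By the standard criterion, this reduces to producing a dominating function: an $(r,\xi)$-integrable bound for $\left|\left(\partial_{s+}^{q-n}g_{r,\xi}\right)(t)\right|$ that is uniform for $t$ in a neighborhood of each point of $[s,T]$. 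The same domination simultaneously yields, by dominated convergence, the continuity of $t\mapsto\left(\partial_{s+}^{q-n}h_s^{(n)}\right)(t)$ on $[s,T]$.

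Constructing this bound is the main obstacle. Applying the asymptotic \eqref{E:AsypMTL} to both Mittag-Leffler factors of $\mathfrak{E}(\tau,\delta-1,r,\xi)$ and integrating in $\xi$ by Lemma \ref{L:Y^2} gives $\int_{\R^d}|\mathfrak{E}(\tau,\delta-1,r,\xi)|\,\ud\xi\le C(s-r)^{a}(\tau-r)^{b}$, where, using the definition of $\rho$ in \eqref{E:RhoTheta}, one checks $a=\tfrac{\rho-1}{2}$ and $b=\tfrac{\rho-2n-3}{2}$, so that $a-b=n+1$ and $a+b=\rho-n-2$; here $a>-1$ follows from Dalang's condition $\rho>0$ and $b\le -1$ from \eqref{E:neg-coef}. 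Consequently
\[
		\int_0^s\ud r\int_{\R^d}\ud\xi\,\left|\left(\partial_{s+}^{q-n}g_{r,\xi}\right)(t)\right|\le C\int_0^s (s-r)^{a}\int_s^t (t-\tau)^{n-q}(\tau-r)^{b}\,\ud\tau\,\ud r ,
\]
and the substitution $u=s-r$, $v=\tau-s$ recasts the right-hand side as $C\int_0^s u^{a}\ud u\int_0^{t-s}(t-s-v)^{n-q}(u+v)^{b}\,\ud v$. The delicate region is the diagonal corner $u,v\to 0$ (i.e. $r,\tau\to s$), where the exponents $a$ and $b$ must not be separated; a scaling analysis of $\int_0^s u^{a}(u+v)^{b}\ud u$ shows it behaves like $v^{\rho-n-1}$ (up to a logarithm when $\rho=n+1$), which is integrable against the kernel $(t-s-v)^{n-q}$ because $q<n+1$ and $\rho>n$, the overall finiteness resting precisely on $q<\rho$ from \eqref{E:nq}. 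Taking the supremum over a compact $t$-neighborhood in the kernel furnishes the required $t$-locally-uniform dominating function, establishing \eqref{E:caputo_h} and the continuity of $\left(\partial_{s+}^{q-n}h_s^{(n)}\right)(t)$.

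Finally, since each $\left(\partial_{s+}^{q-n}g_{r,\xi}\right)(t)\to 0$ as $t\downarrow s$ and the family is dominated, dominated convergence gives $\lim_{t\downarrow s}\left(\partial_{s+}^{q-n}h_s^{(n)}\right)(t)=0$, and by continuity this limit is the value at $s$, so $\LFD^{q-n}h_s^{(n)}(s)=0$. For $n=0$ this is already $\LFD^{q}h_s(s)=0$; for $n\ge 1$, Lemma \ref{L:intdevh(s)} gives $h_s^{(n)}\in C([s,T])$, whence $h_s^{(n-1)}$ is absolutely continuous on $[s,T]$, and since $\left(\partial_{s+}^{q-n}h_s^{(n)}\right)(t)$ has been shown to exist, Lemma \ref{L:lfdformvar} yields $\left(\partial_{s+}^{q}h_s\right)(t)=\left(\partial_{s+}^{q-n}h_s^{(n)}\right)(t)$ and therefore $\LFD^{q}h_s(s)=\LFD^{q-n}h_s^{(n)}(s)=0$, which is \eqref{E:lfd_h}.
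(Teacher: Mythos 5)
Your proposal is correct and follows essentially the same route as the paper: you reduce to differentiating the double integral from Lemma \ref{L:FracInt} under the integral sign, justify this by the finiteness of the triple absolute integral $\int_0^s\ud r\int_{\R^d}\ud\xi\int_s^t\ud\tau\,(t-\tau)^{n-q}\left|\mathfrak{E}(\tau,\beta+\gamma-n-1,r,\xi)\right|$ via Lemma \ref{L:Y^2}, then get continuity and the vanishing limit at $t=s$ by dominated convergence, and finish with Lemma \ref{L:lfdformvar}. The only differences are cosmetic: the paper invokes Talvila's criterion and splits the $(r,\tau)$-region explicitly ($t<2s$ versus $t>2s$, $J_1$, $J_{21}$, $J_{22}$) where you run a scaling analysis of the diagonal corner, but the exponents $\frac{\rho-1}{2}$, $\frac{\rho-3}{2}-n$ and the decisive role of $q<\rho$ are identical.
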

\begin{proof}
		We will prove this lemma in the following three steps: \bigskip

		{\bf\noindent Step 1.~} We will first verify the condition of Fubini's theorem is satisfied,
		so that \eqref{E:caputo_h} is valid \textit{a.e.} on $[s, T]$. By the definition of the Caputo
		derivative \eqref{E:CapDef} and Lemma \ref{L:FracInt},
		\begin{align*}
				\left(\partial_{s+}^{q-n}h_s^{(n)}\right)(t)
				& = \frac{\ud}{\ud t} \left( I_{s+}^{n+1-q}\left[ h_s^{(n)}(\cdot) - h_s^{(n)}(s) \right ] \right) (t)\\
				& = \frac{\ud}{\ud t} \int^{s}_{0}  \ud r\int_{\R^d} \ud \xi\left( I_{s+}^{n+1-q} \left[\mathfrak{E}(\cdot,\beta+\gamma-n,r,\xi) - \mathfrak{E}(s,\beta+\gamma-n,r,\xi)\right] \right)(t).
		\end{align*}
		Noticing that
		\begin{align*}
				\frac{\ud}{\ud t} & \left( I_{s+}^{n+1-q} \left[\mathfrak{E}(\cdot,\beta+\gamma-n,r,\xi) - \mathfrak{E}(s,\beta+\gamma-n,r,\xi)\right] \right)(t)
				=  \left( \partial_{s+}^{q-n} \mathfrak{E}(\cdot,\beta+\gamma-n,r,\xi) \right)(t),
		\end{align*}
		\eqref{E:caputo_h} is valid \textit{a.e.} on $[s, T]$ once one can justify that the derivative
		of $t$ can be passed through the double integrals.  To switch the derivative and the integrals,
		one can carry out some similar, but much more involved, dominated convergence arguments in two
		steps as the proof of Lemma \ref{L:intdevh(s)}. Here we will use a slightly different, but more
		convenient, criterion --- Theorem 4 of \cite{Talvila01iff}. Indeed, notice that, by \eqref{E:caputova},
		\begin{align*}
				\left( \partial_{s+}^{q-n} \mathfrak{E}(\cdot,\beta+\gamma-n,r,\xi) \right)(t)
				= \frac{1}{\Gamma(n+1-q)} \int_s^t (t-y)^{n-q}\mathfrak{E}(y,\beta+\gamma-n-1,r,\xi) \ud y,
		\end{align*}
		In light of Theorem 4 of \cite{Talvila01iff}, it suffices to show that
		\begin{align*}
				J:= \int^{s}_{0}\ud r\;\int_{\R^d} \ud \xi\; \int_s^t \ud y \: (t-y)^{n-q} \left|\mathfrak{E}(y,\beta+\gamma-n-1,r,\xi)\right|<\infty.
		\end{align*}
		An application of Lemma \ref{L:Y^2} shows that
		\begin{align}
				 \label{E:Jbdd}
				 J \leq & C_{*}'\int^{s}_{0}\ud r\int^{t}_{s}\ud y\: (y-r)^{\frac{\rho-3}{2}-n}(t-y)^{n-q}(s-r)^{\frac{\rho-1}{2}},
		\end{align}
		where $C_{*}':=\sqrt{C_{\gamma-n-1}C_{\gamma}}$. When $t<2s$, we have
		\begin{align*}
				J = & C_{*}' \left( \int_0^{2s-t}\ud r +\int_{2s-t}^s \ud r \right) \int^{t}_{s}\ud y\:
				(y-r)^{\frac{\rho-3}{2}-n}(t-y)^{n-q}(s-r)^{\frac{\rho-1}{2}}
				=: C_{*}'\left(J_1+J_2\right).
		\end{align*}
		We first consider $J_1$. Because $\frac{t+r}{2}\le s$, we have that the following:
		\begin{align*}
			 J_1 \leq & \int^{2s-t}_{0}\ud r \int^{t}_{\frac{t+r}{2}}\ud y\: (y-r)^{\frac{\rho-3}{2}-n}(t-y)^{n-q}(s-r)^{\frac{\rho-1}{2}}                      \\
			     \leq & \int^{2s-t}_{0}\ud r \int^{t}_{\frac{t+r}{2}}\ud y\: \left(\frac{t-r}{2}\right)^{\frac{\rho-3}{2}-n}(t-y)^{n-q}(s-r)^{\frac{\rho-1}{2}} \\
			     =    & \frac{1}{n+1-q} \int^{2s-t}_{0}\ud r \: \left(\frac{t-r}{2}\right)^{\frac{\rho-1}{2}-q}(s-r)^{\frac{\rho-1}{2}}                         \\
			     \leq & \frac{1}{n+1-q} \int^{2s-t}_{0}\ud r \: \left(\frac{s-r}{2}\right)^{\frac{\rho-1}{2}-q}(s-r)^{\frac{\rho-1}{2}}                         \\
			     \leq & \frac{1}{n+1-q} \int^{s}_{0}\ud r \: \left(\frac{s-r}{2}\right)^{\frac{\rho-1}{2}-q}(s-r)^{\frac{\rho-1}{2}}                            \\
					 =    & \frac{1}{n+1-q} 2^{q-\frac{\rho-1}{2}} \frac{1}{\rho-q} s^{\rho-q},
		\end{align*}
		where the second inequality is due to $\frac{t-r}{2}\le y-r$ and the third inequality is thanks
		to $\frac{\rho-1}{2}-q<0$, and recall from \eqref{E:nq} that $\rho-q\in(0,1)$.
		As for $J_2$, since $\frac{t+r}{2}\ge s$, we see that
		\begin{align*}
			 J_2 = & \int_{2s-t}^{s}\ud r \left(\int_{s}^{\frac{t+r}{2}}\ud  y + \int_{\frac{t+r}{2}}^t \ud y\right)(y-r)^{\frac{\rho-3}{2}-n}(t-y)^{n-q}(s-r)^{\frac{\rho-1}{2}}  =: (J_{21}+J_{22}).
		\end{align*}
		Estimation for $J_{22}$ is similar to $J_1$:
		\begin{align*}
				J_{22} \leq & \frac{1}{n+1-q} \int_{2s-t}^{s}\ud r \: \left(\frac{s-r}{2}\right)^{\frac{\rho-1}{2}-q}(s-r)^{\frac{\rho-1}{2}}\\
				\leq & \frac{1}{n+1-q} \int_{0}^{s}\ud r \: \left(\frac{s-r}{2}\right)^{\frac{\rho-1}{2}-q}(s-r)^{\frac{\rho-1}{2}}\\
					 = & \frac{1}{n+1-q} 2^{q-\frac{\rho-1}{2}} \frac{1}{\rho-q} s^{\rho-q}.
		\end{align*}
		Finally, for $J_{21}$, we have that
		\begin{align*}
				J_{21} = & \int^{s}_{2s-t}\ud r\int_{s}^{\frac{t+r}{2}}\ud y\;\;(y-r)^{\frac{\rho-3}{2}-n}(t-y)^{n-q}(s-r)^{\frac{\rho-1}{2}} \\
				 \leq & \int^{s}_{2s-t}\ud r\left(\frac{t-r}2\right)^{n-q} (s-r)^{\frac{\rho-1}{2}}\int_{s}^{\frac{t+r}{2}}(y-r)^{\frac{\rho-3}{2}-n}\ud y  \\
				 = & \frac{1}{{n-\frac{\rho-1}{2}}}\int^{s}_{2s-t}\ud r \left(\frac{t-r}2\right)^{n-q}(s-r)^{\frac{\rho-1}{2}} \left[(s-r)^{\frac{\rho-1}{2}-n}- \left(\frac{t-r}2\right)^{\frac{\rho-1}{2}-n}\right].
		\end{align*}
		where we have used the fact that $n-(\rho-1)/2>0$.  Because $n-q<0$ and $(\rho-1)/2<n$, we can
		replace $ (t-r)/2$ by $(s-r)/2$ to see that
		\begin{align*}
		    J_{21} \leq C^\dagger\int^{s}_{0} (s-r)^{\rho-1-q} \ud r
						    	= \frac{C^\dagger}{\rho-q}s^{\rho-q},
									\quad \text{with $C^\dagger:=\frac{2}{2n-\rho+1}\left[2^{q-n}-2^{q-\frac{\rho-1}{2}}\right].$}
		\end{align*}
		Combining all these terms proves $J<\infty$. When $t>2s$, we have $\frac{t+r}{2}\ge s$ for all
		$r\in(0,s)$. Then $J$ is dominated as $J_{2}$ is in the case of $t<2s$.  Therefore, it is legal
		to switch the derivative and the double integrals to conclude that \eqref{E:caputo_h} holds {\it
		a.e.} on $t\in[s,T]$. \bigskip

		{\bf\noindent Step 2.~} Now we will show $\left(\partial_{s+}^{q-n} h_s^{ (n)}\right)(t)$ is
		continuous on $[s, T]$ by showing that
		\begin{align}
			  \label{E_:limInt}
				\lim_{t\rightarrow t_0}\left(\partial_{s+}^{q-n} h_s^{ (n)}\right)(t)
				=\left(\partial_{s+}^{q-n} h_s^{ (n)}\right)(t_0)\qquad\text{for all $t_0\in[s,T]$},
		\end{align}
		which then implies that \eqref{E:caputo_h} is valid for all $t\in [s, T]$. Notice that
		Step 1 proves that for {\it a.e.} $t\in [s,T]$,
		\begin{align}
				\label{E:caputo_int}
				\left(\partial_{s+}^{q-n} h_s^{ (n)}\right)(t) =
				C \int_{\R}\ud r\int_{\R}\ud y\int _{\R^d}\ud \xi&\;\;
				\One_{[0,s]}(r) \One_{[s,t]}(y)\mathfrak{E}(y,\beta+\gamma-n-1,r,\xi)(t-y)^{n-q}
		\end{align}
		with $C=1/\Gamma(n+1-q)$, where one can choose whatever order of integrations thanks to Fubini's
		theorem. Hence, the proof of \eqref{E_:limInt} amounts to passing the limit through three
		integrals.

		First notice that one can switch the order of the limit $t$ with the $\ud r$-integral because
		from the proof in Step 1, the integrand for the $\ud r$-integral is dominated by a function of
		$(s-r)$ that does not depend on $t$ and is integrable with respect to $\ud r$.

		It remains to show that one can further pass the limit into the rest two integrals, namely,
		\begin{align}
				\label{E_:limInt2}
				\begin{aligned}
						\lim_{t\rightarrow t_0} & \int_{\R} \ud y\int _{\R^d}\ud \xi \,\,\One_{[s,t]}(y)\mathfrak{E}(y,\beta+\gamma-n-1,r,\xi)(t-y)^{n-q} \\
				                      = & \int_{\R}\ud y\int _{\R^d}\ud \xi \,\,\One_{[s,t_0]}(y)\mathfrak{E}(y,\beta+\gamma-n-1,r,\xi)(t_0-y)^{n-q}.
				\end{aligned}
		\end{align}
		In the following, let $C$ be a general constant may vary from line to line.  For $t\in[s,T]$,
		because $n-q\in (-1,0)$ and
		\begin{align*}
			\int_s^t (t-y)^{n-q}\ud y = \frac{1}{n+1-q}(t-s)^{n+1-q} \le \frac{1}{n+1-q}(T-s)^{n+1-q} = \int_s^T (T-y)^{n-q}\ud y,
		\end{align*}
		together with \eqref{E:AsypMTL}, we see that the integrand of the double integral can be bounded
		as
		\begin{multline*}
				\left|\One_{[s,t]}(y)\mathfrak{E}(y,\beta+\gamma-n-1,r,\xi)(t-y)^{n-q}\right| \\
				\leq C(s-r)^{2(\beta+\gamma)-n-3} (T-y)^{n-q}\One_{[s,T]}(y)
				\left(1+2^{-1} \nu\left(s-r\right)^{\beta }|\xi|^{\alpha}\right)^{-2} =: I(y,\xi).
		\end{multline*}
		Notice that this upper bound $I(y,\xi)$ does not depend on $t$. Moreover,
		\begin{align*}
				\int_\R \ud y \int_{\R^d}\ud\xi \: \left|I(y,\xi)\right|
				\leq &C(s-r)^{2(\beta+\gamma)-n-3} \int_s^T\ud y\: (T-y)^{n-q}\int_{\R^d} \ud \xi\,
				\left(1+2^{-1} \nu\left(s-r\right)^{\beta }|\xi|^{\alpha}\right)^{-2} \\
				=    &C(s-r)^{2(\beta+\gamma)-n-3} (T-y)^{n+1-q}\int_{\R^d} \ud \xi\,
				\left(1+2^{-1} \nu\left(s-r\right)^{\beta }|\xi|^{\alpha}\right)^{-2} \\
				<    & \infty.
		\end{align*}
		Then, an application of the dominated convergence proves \eqref{E_:limInt2}.  This proves
		\eqref{E_:limInt}. Therefore, \eqref{E:caputo_h} holds for all $t\in[s,T]$. \bigskip

		{\bf\noindent Step 3.~} Finally, for \eqref{E:lfd_h}, the first equality holds trivially in case
		of $n=0$, or by Lemma \ref{L:lfdformvar} otherwise. As for the second equality, by
		\eqref{E:LFD-Def}, \eqref{E:CapDef} and Step 2, we can push the limit of $t$ inside the triple
		integrals of \eqref{E:caputo_int} to conclude that
		\begin{align*}
				\LFD^{q-n}h_s^{(n)}(s) = \lim_{t\rightarrow s_+} \left( \partial_{s+}^{q-n} h_s^{ (n)}\right)(t) =0.
		\end{align*}
		This completes the whole proof of Lemma \ref{L:caputo_h}.
\end{proof}

\section{Proof of proposition \ref{mP:G-SD}}
\label{S:Prop}

Now we are ready to prove Proposition \ref{mP:G-SD}.

\begin{proof}[Proof of proposition \ref{mP:G-SD}]
Denote the left-hand side of \eqref{mE:G-t2} by $I$ and then
\begin{align*}
		I=  C_{\gamma}\int_0^s \ud r \left[(t-r)^{2(\beta+\gamma-1)-d\beta/\alpha}+(s-r)^{2(\beta+\gamma-1)-d\beta/\alpha}\right]- 2 h_s(t).
\end{align*}

When $\rho\leq1$, by Proposition \ref{P:Holder_h}, we have $h_s(t)=h_s(s)+o\left[(t-s)^{q}\right]$ and
so there is a constant $K$, such that $|h_s(t)-h_s(s)|<K(t-s)^q$. Therefore,
\begin{align*}
		I & \leq \frac{ C_{\gamma}}{\rho}\left[t^{\rho}-(t-s)^{\rho} + s^{\rho}\right] -2\left[h_s(s)-K(t-s)^q\right] \\
			& = \frac{ C_{\gamma}}{\rho}\left[t^{\rho}-(t-s)^{\rho} + s^{\rho} -2s^\rho\right]+2 K(t-s)^q      \\
			& = O\left[(t-s)^{\rho}\right]+2 K(t-s)^q,
\end{align*}
with $2 K(t-s)^q$ being the dominating term. \bigskip

Similarly, when $1<\rho\leq 2$, by Proposition \ref{P:Holder_h} and Lemma \ref{L:intdevh(s)},
\begin{align*}
		I\leq & \frac{ C_{\gamma}}{\rho}\left[t^{\rho}-(t-s)^{\rho}+s^{\rho}\right]-2 \left[h_s(s)+\frac12C_{\gamma}s^{\rho-1}(t-s)- K(t-s)^q\right]     \\
		    = &  C_{\gamma}\left[\frac{1}{\rho}\left(t^{\rho}-s^{\rho}\right)-s^{\rho-1}(t-s)\right] - \frac{ C_{\gamma}}{\rho}(t-s)^{\rho}+2*K(t-s)^q \\
				= & \frac{ C_{\gamma}(\rho-1)s^{\rho-2}}{2}(t-s)^2 + O\left[(t-s)^3\right] + O\left[(t-s)^{\rho}\right]+2 K(t-s)^q,
\end{align*}
with $2 K(t-s)^q$ being the dominating term. \bigskip

Finally, when $\rho>2$,
\begin{align*}
		I\leq & \frac{ C_{\gamma}}{\rho}\left[t^{\rho}-(t-s)^{\rho}+s^{\rho}\right]-2 \left[h_s(s)+\frac12C_{\gamma}s^{\rho-1}(t-s)+\frac12h_s''(s)(t-s)^2- K(t-s)^q\right] \\
		    = & C_{\gamma}\left[\frac{1}{\rho}\left(t^{\rho}-s^{\rho}\right)-s^{\rho-1}(t-s)\right] - \frac{ C_{\gamma}}{\rho}(t-s)^{\rho}-h_s''(s)(t-s)^2+2K(t-s)^q        \\
				= & \frac{ C_{\gamma}(\rho-1)s^{\rho-2}}{2}(t-s)^2 + O\left[(t-s)^3\right] + O\left[(t-s)^{\rho}\right] -h_s''(s)(t-s)^2+2K(t-s)^q                              \\
				= & \left(\frac{C_{\gamma}(\rho-1)}{2}+\frac{C_{\gamma-1}}{\rho-2}-C_{\gamma,\gamma-1}\right)s^{\rho-2}(t-s)^2+O\left[(t-s)^3\right]+O\left[(t-s)^{\rho}\right] +2K(t-s)^q,
\end{align*}
where the second order term is the dominating term.
This completes the proof of Proposition \ref{mP:G-SD}.
\end{proof}
\addcontentsline{toc}{section}{Bibliography}
\bibliographystyle{abbrv}

\end{document}